\documentclass[british,english]{amsart}
\usepackage{amsmath,amsthm,amssymb,mathtools}
\usepackage[mathscr]{eucal}
 \usepackage{cite}
\usepackage{upgreek}
\usepackage[bookmarks,bookmarksnumbered]{hyperref}
\usepackage{enumerate}
\usepackage{amsmath}
\usepackage{mathrsfs}
\usepackage{blindtext}
\usepackage{scrextend}
\usepackage{enumitem}
\addtokomafont{labelinglabel}{\sffamily}
\usepackage{color}
\usepackage[at]{easylist}

\numberwithin{equation}{section}

\theoremstyle{plain}

\newtheorem{theorem}{Theorem}[section]
\newtheorem{claim}[theorem]{Claim}
\newtheorem{lemma}[theorem]{Lemma}
\newtheorem{proposition}[theorem]{Proposition}
\newtheorem{corollary}[theorem]{Corollary}
\theoremstyle{definition}
\newtheorem{definition}[theorem]{Definition}
\newtheorem*{definition*}{Definition}

\newtheorem{remark}[theorem]{Remark}

\newcommand{\T}{\mathbb{T}}
\newcommand{\F}{\mathbb{F}}

\newcommand{\cB}{\mathcal{B}}
\newcommand{\C}{\mathbb{C}}

\newcommand{\cH}{\mathcal{H}}

\newcommand{\cK}{\mathcal{K}}

\newcommand{\N}{\mathbb{N}}

\newcommand{\cU}{\mathcal{U}}

\newcommand{\Z}{\mathbb{Z}}\newcommand{\cZ}{\mathcal{Z}}

\newcommand{\Ad}{\operatorname{Ad}}

\newcommand{\id}{\operatorname{Id}}

\newcommand{\eps}{\varepsilon}

\begin{document}
\title{Characters of surface groups}
\author{David Gao, Adrian Ioana, Itamar Vigdorovich}
\thanks{I.V. was supported by NSF postdoctoral fellowship DMS-2402368. A.I. was supported by NSF grants DMS-2153805 and DMS-2451697}
\begin{abstract}
We initiate the study of characters of surface groups and their corresponding tracial representations. We show that any tracial representation can be approximated arbitrarily well in the Wasserstein topology by factorial tracial representations with spectral gap. In particular, we deduce that the space of traces of a surface group is the Poulsen simplex, thereby resolving positively a question posed by Orovitz, Slutsky, and the third author.
\end{abstract}

\maketitle
\section{Introduction}
Let $\Sigma_g$ denote the closed orientable surface of genus $g \geq 2$, and let $\Gamma_g=\pi_1(\Sigma_g)$ denote its fundamental group. The representation theory of $\Gamma_g$ has been studied in many different contexts. For example, the moduli space of representations $\mathrm{Rep}(\Gamma_g,G)$, where $G$ is a semisimple Lie group, together with the character variety $\mathrm{Char}(\Gamma_g,G)=\mathrm{Rep}(\Gamma_g,G)/G$, has been studied in the very active area of Higher Teichm\"uller Theory; see \cite{wienhard2018invitation} and the references therein. The case where $G$ is an arbitrary compact group is studied in \cite{breuillard2006dense}. The unitary representation theory of $\Gamma_g$, which can be understood in terms of the representation variety in the case $G=\cU(\cH)$ for a fixed separable infinite-dimensional Hilbert space $\cH$, has also been studied. For example, it is shown in \cite{lubotzky2004finite} that the space of representations with finite image is dense in the space of all representations with respect to the Fell topology.

In this paper we focus on \emph{tracial representations}. These are unitary representations $\pi:\Gamma_g\to \cU(\cH)$ such that 
the corresponding algebra
$\pi(\C\Gamma_g)$
admits a tracial positive linear functional (see Section 2 for the exact definition). This class in particular includes all finite-dimensional representations, the regular representation, as well as equivalence-relation representations arising from probability-measure-preserving actions. There are several reasons for taking this approach:
\begin{itemize}
    \item The famous theorem of Thoma \cite{Thoma-characters} states that for a discrete group which is not virtually abelian, the unitary dual (that is, the space of equivalence classes of irreducible unitary representations) is not a standard Borel space, and is thus pathological from the point of view of descriptive set theory. To overcome this, Thoma advocated restricting attention to tracial representations, for which the space of all such representations (up to quasi-equivalence) is a Polish space.
    \item A tracial representation $\pi$ is characterized by its corresponding character,  $\chi_\pi=\mathrm{tr}\circ \pi$, mirroring finite-dimensional representation theory. This yields a parametrization of the moduli space of tracial representations in terms of functions on the group.
    \item The space of unitary representations is usually endowed with the Fell topology, or with the pointwise weak operator topology (WOT), both of which are rather weak and often make density statements less potent. The space of tracial representations, on the other hand, is also naturally endowed with the topology of pointwise convergence of the corresponding characters, and furthermore with the rather strong Wasserstein $2$-metric, allowing for potentially sharper statements.
    \item Tracial representations of fundamental groups of manifolds admit a geometric interpretation: they yield flat Hilbertian bundles that give rise to \(L^2\)-index theory and torsion theory \cite{carey2000correspondences}.
\end{itemize}

We now turn to a more concrete definition of our notions; see Section 2 for the precise details. Let $\Gamma$ be an arbitrary countable group and $\pi:\Gamma\to \cU(\cH)$ a unitary representation. Then $\pi$ extends to a $*$-representation of the group algebra $\pi:\C\Gamma\to \cB(\cH)$. The von Neumann algebra of $\pi$ is the closure
\[
M_\pi=\overline{\pi(\C\Gamma)}^{\mathrm{WOT}}\subset \cB(\cH)
\]
with respect to the weak operator topology on $\cB(\cH)$. Suppose that there exists a faithful positive linear functional $\tau:M_\pi\to \C$ which satisfies $\tau(xy)=\tau(yx)$ for all $x,y\in M_\pi$, and whose restriction to the unit ball is WOT-continuous. In that case we say that the von Neumann algebra $M_\pi$ is \emph{tracial}, and that  $\pi$ is a \emph{tracial representation}. The composition $\chi_\pi=\tau\circ\pi$ is then a \emph{trace} on $\Gamma$, that is, a positive-definite, conjugation-invariant function  on $\Gamma$ normalized by $\chi_\pi(e)=1$, and all such functions arise in this way. There is thus a one-to-one correspondence between tracial representations (up to quasi-equivalence), and traces on $\Gamma$.

The appropriate notion of irreducibility in the context of tracial representations is factoriality. We say that $\pi$ is \emph{factorial} if the center of the algebra $M_\pi$ consists only of the scalar operators $\C\cdot \id_{\cH}$. This is equivalent to saying that the corresponding trace $\chi_\pi$ is a \emph{character}, that is, a trace on $\Gamma$ which cannot be written as a proper convex combination of two distinct traces. In other words, the space of all traces $\mathrm{Tr}(\Gamma)$ is a compact convex subset of $\ell^\infty(\Gamma)$ in the topology of pointwise convergence, and the space of characters $\mathrm{Ch}(\Gamma)$ consists of its extreme points.

Since our results also apply to free groups, we will use $\Sigma_g^*$ to denote a closed orientable surface of genus $g\geq 2$ which is possibly punctured, and by $\Gamma_g^*$ its fundamental group. Thus $\Gamma_g^*$ can refer either to the surface group $\Gamma_g$ or to the free group $F_g$.

\begin{theorem}\label{thm:main-tracial reps}
    Consider the space of tracial representations of $\Gamma_g^*$. The subset of representations which are factorial and non-amenable is dense with respect to the Wasserstein topology.
\end{theorem}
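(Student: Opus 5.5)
The plan is to realize the approximation by an explicit \emph{twisting} construction inside a larger tracial von Neumann algebra, and then to prove factoriality and non-amenability with free-probability and amalgamated-free-product techniques. Recall that two tracial representations are Wasserstein-close when, after embedding both into a common tracial von Neumann algebra $(N,\tau)$, the images of the standard generators are close in $\|\cdot\|_2$. Fix a tracial representation $\pi:\Gamma_g^*\to\mathcal U(M)$ and $\eps>0$. I will build $\pi':\Gamma_g^*\to\mathcal U(N)$ with $M\subset N$ and $\|\pi'(s)-\pi(s)\|_2<\eps$ for every standard generator $s$, such that $\pi'(\Gamma_g^*)''$ is a non-amenable factor.

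\textbf{Free group case.} Let $\Gamma_g^*=F_g$ with free generators $s_1,\dots,s_g$. Put $N=M*L(F_g)$ and let $v_1,\dots,v_g$ be the free Haar unitaries. Choose $u_j=\exp(i\eps h_j)$, where $h_j=h_j^*\in\{v_j\}''$ has diffuse spectral measure and $\|h_j\|\le 1$, so that $\|u_j-1\|_2<\eps$. Define $\pi'(s_j)=\pi(s_j)u_j$. There is no relation to preserve. The algebras $\{u_j\}''$ are diffuse, abelian, and $*$-free from $M$ and from each other. I expect a standard free-product argument (Dykema's description of free products, or a Popa-style asymptotic-freeness/intertwining argument) to show that $P=\pi'(F_g)''$ has trivial relative commutant against these diffuse free pieces. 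That would make $P$ a factor and non-amenable, because it contains two free diffuse subalgebras in a position that rules out amenability.

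\textbf{Surface group case.} Write $\Gamma_g=A*_C B$ with
\[
A=\langle a_1,b_1\rangle\cong F_2,\qquad B=\langle a_2,\dots,b_g\rangle\cong F_{2g-2},\qquad C=\langle c\rangle,\ c=[a_1,b_1]=\Big(\prod_{i\ge 2}[a_i,b_i]\Big)^{-1}.
\]
\begin{enumerate}
\item Perturb $\pi|_A$ as in the free case to make $\pi(A)''$ "large". This alters $\pi(c)$, so the perturbation must be done compatibly. Concretely, I would perturb only along the free generators in a way that keeps $\pi(c)$ unchanged, or else accept a small change of $\pi(c)$ and re-solve the relation on the $B$ side.
\item Let $D=\{\pi(c)\}''$, which is abelian. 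Form the amalgamated free product $N=M*_D(D\bar\otimes L(\Z))$ and let $w=1\otimes u$, where $u=\exp(i\eps h)$ with $h\in L(\Z)$ diffuse. Then $w$ commutes with $\pi(c)$ and $\|w-1\|_2<\eps$.
\item Define $\pi'=\pi$ on $A$ and $\pi'=\Ad(w)\circ\pi$ on $B$. This is a Dehn-twist-type deformation. It is a well-defined homomorphism because $w$ commutes with $\pi(c)$. The estimate $\|w\pi(x)w^*-\pi(x)\|_2\le 2\|w-1\|_2$ gives Wasserstein closeness.
\end{enumerate}

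\textbf{Main obstacle.} The main difficulty is showing that $\pi'(\Gamma_g)''$ is a non-amenable factor. The algebra is generated by $\pi(A)''$ and $w\pi(B)''w^*$, which are "freely twisted" over $D$. The first step is to show that, because $u$ is diffuse and free with amalgamation over $D$, these two algebras are free with amalgamation over $D$, at least up to controllable corrections. One then applies factoriality criteria for amalgamated free products over abelian algebras, in the style of Ueda or Ioana--Peterson--Popa. Those criteria require $D\not\prec$ trivial, together with nondegeneracy conditions: for example, that $\pi(A)''\ominus D$ and $\pi(B)''\ominus D$ contain enough unitaries. The free-group perturbation in step 1 is meant to guarantee these conditions. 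If this route fails, the fallback is to apply the free-group perturbation on the $B$ side as well, which $\Ad(w)$ does not disturb, and to argue the relative-commutant computation directly via the asymptotic freeness of $\Ad(u)$. In either case, non-amenability would then follow from the presence of a free-product-over-$D$ subalgebra with non-trivial pieces on both sides.
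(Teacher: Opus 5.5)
There is a genuine gap in the surface-group case, and it sits exactly at what you call the main obstacle. The twist over $D=\{\pi(c)\}''$ can never remove central elements that already lie in $D$. Since $D\bar\otimes L\Z$ is abelian, any $z\in\cZ(\pi(\Gamma_g)'')\cap D$ commutes with $w$. Hence $z=wzw^*$ commutes with $w\pi(B)w^*$ as well as with $\pi(A)$. Also $z\in\{\pi'(c)\}''$, because $\pi'(c)=\pi(c)$. So $z\in\cZ(\pi'(\Gamma_g)'')$.

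Consequently no nondegeneracy condition on $\pi(A)''\ominus D$ or $\pi(B)''\ominus D$ can give a factor: you must first arrange $\cZ(\pi(\Gamma_g)'')\cap\{\pi(c)\}''=\C$. This fails, for example, for $\pi=\pi_1\oplus 1$, where $\pi_1$ is the two-dimensional Pauli representation ($\pi_1(a_1)=\pi_1(b_2)$ a clock, $\pi_1(b_1)=\pi_1(a_2)$ a shift, all other generators trivial). Then $\pi_1(c)=-1$, so both central projections lie in $D$.

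Your step 1 is where this would have to be fixed, but it is not specified. ``Perturb while keeping $\pi(c)$ unchanged'' is precisely the missing idea. ``Re-solve the relation on the $B$ side'' has no justification: you would need unitaries near $\pi(a_i),\pi(b_i)$, $i\geq 2$, with a prescribed new product of commutators, and you would then still have to control the center.

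The paper handles this in three moves before the twist:
\begin{enumerate}
\item It tensors with $\lambda_\eps\circ\phi$ for a surjection $\phi:\Gamma_g\to F_2$. This respects the relation automatically and gives $\{\pi(a_1)\}''\cap\{\pi(c)\}''=\C$.
\item It applies the free multiplicative shift $\pi(b_1)\mapsto\pi(b_1)u$, with $u$ a non-scalar unitary in $M*_{\{\pi(a_1)\}''}(\{\pi(a_1)\}''\bar\otimes L\Z)$. Since $u$ commutes with $\pi(a_1)$, this keeps $[a_1,b_1]$ fixed. Lemma~\ref{intersect} then gives $\{\pi(b_1)u\}'\cap M\subset\{\pi(a_1)\}''$, hence $\cZ\cap D=\C$.
\item It tensors a second time, which makes $\pi|_A$ and $\pi|_B$ non-amenable (this supplies $M_i\nprec D$), while Lemma~\ref{lem:mixing-lem} preserves $\cZ\cap D=\C$.
\end{enumerate}
Only then is Lemma~\ref{approx-factorial} applied. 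Note also that the two twisted pieces are not free with amalgamation over $D$ in general. The paper never uses such freeness; it bounds the center via the IPP relative-commutant theorem and $M\cap wMw^*=D$.

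Your non-amenability argument is also insufficient. The theorem concerns amenability of the representation in Bekka's sense. Non-amenability of the generated von Neumann algebra does not imply it; only the converse holds, by Lemma~\ref{lem:no_amenable_summand}. For instance, $1\oplus\lambda_{F_2}$ generates a non-amenable algebra but is an amenable representation. Moreover, non-trivial pieces over an abelian subalgebra do not even force a non-amenable algebra: $(\Z\times\Z_2)*_{\Z}(\Z\times\Z_2)\cong\Z\times(\Z_2*\Z_2)$ is amenable. The paper gets non-amenability from Lemma~\ref{lem:tensor_permanence}. After tensoring with $\tilde\lambda_\eps$, whose image contains the regular representation of a free subgroup, $\pi|_B$ is non-amenable. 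The twist only conjugates $\pi|_B$ by a unitary, so this persists.

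Your free-group case has the same two defects in milder form. Factoriality of the algebra generated by the $\pi(s_j)u_j$ is only ``expected'', and Bekka non-amenability is not argued. The paper instead tensors with $\lambda_m$ to obtain diffuse generators and a non-amenable representation. It then conjugates $a_1,\dots,a_{g-1}$ by a free unitary in $M*L\Z$ and computes the center via IPP and Lemma~\ref{intersect}. Finally it tensors with $\lambda_m$ again, preserving factoriality by the argument of Lemma~\ref{lem:mixing-lem}.
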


The Wasserstein topology on tracial representations was introduced in \cite{BianeVoiculescu2001}. We recall its definition in Section 2, but note at this point that it is a very  fine topology. 

Amenability is understood here in the sense of Bekka \cite{bekka1990amenable}: a unitary representation $\pi$ is called amenable if $\pi\otimes \bar{\pi}$ admits almost invariant vectors. Non-amenable representations cannot contain finite-dimensional subrepresentations, and moreover have spectral gap, in the sense that $\|\pi(\mu)\|_{\mathrm{op}}<1$, for some symmetric probability measure $\mu$ on the group. 

We now state the result in terms of the traces corresponding to tracial representations. The compact convex set of traces $\mathrm{Tr}(\Gamma)\subset \ell^\infty(\Gamma)$ is always a Choquet simplex, meaning that points are obtained uniquely as barycenters of probability measures on the extreme points (= characters); see Section 2 for more details. Among all Choquet simplices, the Poulsen simplex, introduced in \cite{Poulsen1961}, stands out: it is the unique Choquet simplex, up to affine homeomorphism, with dense extreme points; see \cite{Lindenstrauss1978}. It is in fact the Fra\"\i ss\'e limit of all finite-dimensional simplices \cite{conley2018fraisse}, and as a result enjoys strong universality and symmetry properties. We obtain the following consequence of Theorem \ref{thm:main-tracial reps}.

\begin{corollary}\label{cor:main-poulsen}
    Every trace $\varphi$ on $\Gamma_g^*$ is a pointwise limit of characters, $\varphi=\lim \chi_n$. Thus, the space of traces $\mathrm{Tr}(\Gamma_g^*)$ is the Poulsen simplex.
\end{corollary}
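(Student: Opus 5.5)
The corollary will follow from Theorem \ref{thm:main-tracial reps} by two routine reductions: first pass from the Wasserstein topology to pointwise convergence of characters, then invoke the characterization of the Poulsen simplex.

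Fix a trace $\varphi$ on $\Gamma_g^*$. Let $\pi$ be the associated tracial representation, obtained by the GNS construction, with trace $\tau$ on $M_\pi$ such that $\varphi=\tau\circ\pi$. By Theorem \ref{thm:main-tracial reps} there is a sequence of factorial tracial representations $\pi_n$, with traces $\tau_n$ on $M_{\pi_n}$, converging to $\pi$ in the Wasserstein topology. Set $\chi_n=\tau_n\circ\pi_n$. Since $\pi_n$ is factorial, the center of $M_{\pi_n}$ is trivial, so $\tau_n$ is the unique normal tracial state and $\chi_n$ is extreme in $\mathrm{Tr}(\Gamma_g^*)$. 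Thus each $\chi_n$ is a character.

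It remains to show that Wasserstein convergence implies $\chi_n(\gamma)\to\varphi(\gamma)$ for every $\gamma$. I expect the Wasserstein $2$-distance recalled in Section 2 to be, in the Biane--Voiculescu formulation, an infimum over tracial von Neumann algebras $(N,\sigma)$ containing trace-preserving copies of $M_\pi$ and $M_{\pi_n}$ of $\big(\sum_i \|\pi(s_i)-\pi_n(s_i)\|_{2,\sigma}^2\big)^{1/2}$, where $s_i$ runs over the generators. Given distance less than $\varepsilon$, we can therefore realize both representations in a common $(N,\sigma)$ with generators that are $\varepsilon$-close in $\|\cdot\|_2$. For a word $\gamma=s_{i_1}^{\pm1}\cdots s_{i_k}^{\pm1}$, unitarity and the identity $\|u^*-v^*\|_2=\|u-v\|_2$ give, by telescoping,
\[
\|\pi(\gamma)-\pi_n(\gamma)\|_2\le k\varepsilon .
\]
Cauchy--Schwarz then yields $|\varphi(\gamma)-\chi_n(\gamma)|=|\sigma(\pi(\gamma)-\pi_n(\gamma))|\le k\varepsilon$. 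This gives $\varphi=\lim\chi_n$ pointwise. If the metric in Section 2 is instead phrased through operator norms or with a different normalization, the same telescoping argument applies. This is the only place where the precise definition is used, and it is the step I would check most carefully.

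For the second claim, $\mathrm{Tr}(\Gamma_g^*)$ is a metrizable Choquet simplex, since $\Gamma_g^*$ is countable, as recalled in Section 2. The first part shows that its extreme points $\mathrm{Ch}(\Gamma_g^*)$ are dense. It is also nontrivial: the trivial character and the regular character $\delta_e$ are distinct extreme points, so the simplex is not a single point. By the uniqueness theorem of Lindenstrauss--Olsen--Sternfeld \cite{Lindenstrauss1978}, a metrizable simplex with more than one point and dense extreme boundary is affinely homeomorphic to the Poulsen simplex.
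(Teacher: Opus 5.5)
Your proposal is correct and matches the paper's intended deduction: Theorem \ref{thm:main-tracial reps} gives Wasserstein density of factorial representations, Wasserstein convergence implies pointwise convergence of traces, and the Lindenstrauss--Olsen--Sternfeld uniqueness theorem then identifies $\mathrm{Tr}(\Gamma_g^*)$ as the Poulsen simplex. The paper leaves two steps implicit that you supply correctly: the telescoping estimate with its definition of $W_2$ (using that traces of factorial representations are characters), and the check via the trivial and regular characters that the simplex is not a single point.
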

This answers a question raised in \cite{OrovitzSlutskyVigdorovich2023}. It was shown in \cite{OrovitzSlutskyVigdorovich2023} that the space of traces of a non-abelian free group is the Poulsen simplex. Since one has a natural quotient $\Gamma_g\to F_g$, obtained by puncturing $\Sigma_g$, it follows that the Poulsen simplex of traces on the free group appears naturally as a face of the simplex of traces of the surface group. It was therefore asked whether the latter simplex is itself Poulsen. Corollary \ref{cor:main-poulsen} answers this question positively. In fact, it does so in a stronger sense: the approximation holds in the finer Wasserstein topology, and moreover the approximating sequence $\chi_n$ can be chosen to consist of non-amenable characters. In particular, these are not characters of finite-dimensional representations, and they moreover satisfy \cite[Equation~4.14]{levit2023spectral}.  In contrast, there exist traces on $\Gamma_g^*$ which cannot be approximated pointwise by finite-dimensional traces. Indeed, this follows from the refutation of Connes' embedding problem in \cite{MIP*=RE}.\footnote{We thank M. Chapman for explaining to us how to deduce this statement for surface groups.}

The aforementioned results of \cite{OrovitzSlutskyVigdorovich2023} were generalized in full extent to free products in \cite{ioana2024trace}.  Together with our present results, this might suggest that the property of having a Poulsen trace simplex is abundant among groups with hyperbolic features, but this is far from being the case. Indeed, virtually free groups typically do not have this property; see \cite{OrovitzSlutskyVigdorovich2023,ioana2024trace}. Moreover, random groups in the relevant regime of Gromov's model are hyperbolic and have Kazhdan's property (T). However, property (T) implies that the simplex of traces is a \emph{Bauer simplex}, that is, its extreme points form a closed set \cite{levit2023spectral}, and is therefore as far as possible from being Poulsen.

It is also interesting to understand the space of traces for fundamental groups of closed hyperbolic manifolds of dimension greater than \(2\). Such groups often admit free quotients \cite{lubotzky1996free}. Even more intriguing is the case of complex hyperbolic manifolds, where  more rigidity is expected. The quaternionic and octonionic cases already have property (T), and therefore a Bauer simplex of traces, although one may still expect a rich character theory because of the abundance of normal subgroups. Finally, lattices in simple Lie groups of rank at least \(2\) not only have property (T), but in fact admit a very rigid character theory \cite{Bekka,peterson2015character,bader2023charmenability}. Corollary \ref{cor:main-poulsen} thus stands in strong contrast with the case of lattices in higher-rank Lie groups, and aligns with the familiar rank-one versus higher-rank dichotomy.

Returning to general unitary representations, it follows from \cite{lubotzky2004finite} (and more specifically from the fact that $C^*(\Gamma_g^*)$ is residually-finite-dimensional) that any unitary representation can be approximated arbitrarily well by a tracial representation in the Fell topology.
We thus get the following application of our results to ordinary unitary representations:

\begin{corollary}
    In the space of all equivalence classes of unitary representations, the set of factorial, tracial, non-amenable representations is dense in the Fell topology.
\end{corollary}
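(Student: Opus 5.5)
We deduce the corollary by composing two approximations, one in the Fell topology and one in the Wasserstein topology, and then checking that Wasserstein closeness of tracial representations implies Fell closeness.

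\emph{Step 1: reduce to tracial representations.} Fix a unitary representation $\rho$ of $\Gamma_g^*$ on a separable space and a basic Fell neighbourhood $U$ of $\rho$. Such a neighbourhood is determined by a finite set $F\subset\Gamma_g^*$, finitely many vectors $\xi_1,\dots,\xi_k$ and $\varepsilon>0$. It consists of those $\sigma$ for which each diagonal coefficient $\langle\rho(\cdot)\xi_i,\xi_i\rangle$ is approximated on $F$, within $\varepsilon$, by a sum of diagonal coefficients of $\sigma$. By \cite{lubotzky2004finite}, $C^*(\Gamma_g^*)$ is residually finite-dimensional. Hence finite-dimensional representations are Fell dense in the full dual, so we may find a finite-dimensional $\sigma\in U$. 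Every finite-dimensional representation is tracial, using the normalized matrix trace on $M_\sigma\subset M_n(\C)$. We may therefore assume $\rho=\sigma$ is tracial.

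\emph{Step 2: pass from Wasserstein to Fell.} By Theorem \ref{thm:main-tracial reps}, there are factorial non-amenable tracial representations $\pi_n$ converging to $\sigma$ in the Wasserstein topology. It remains to show $\pi_n\to\sigma$ in the Fell topology. The key point, which I expect to be the only real obstacle, is unwinding the definition from Section 2. Wasserstein-$2$ convergence means there are trace-preserving embeddings of $M_\sigma$ and $M_{\pi_n}$ into a common tracial von Neumann algebra $(N,\tau)$ such that $\|\sigma(s)-\pi_n(s)\|_2\to0$ in $L^2(N,\tau)$ for each generator $s$. Since the maps are multiplicative and unitaries have norm one, this extends to every group element. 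In particular $\chi_{\pi_n}\to\chi_\sigma$ pointwise.

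It is enough to approximate coefficients of the GNS vector. Any vector in $L^2(M_\sigma)$ is approximated by $x\hat 1$ with $x\in\sigma(\C\Gamma)$, and $\sigma$ is quasi-equivalent to (a multiple of) its GNS representation. Coefficients of $x\hat1$ are finite combinations $\sum a_i\bar a_j\chi_\sigma(h_j^{-1}gh_i)$. These are approximated on $F$ by the same combinations of values of $\chi_{\pi_n}$, which are coefficients of the vectors $x_n\hat1$ in the GNS space of $\chi_{\pi_n}$. That GNS space is quasi-equivalent to $\pi_n$, because $M_{\pi_n}$ is tracial with faithful $\tau$. Quasi-equivalent representations are weakly equivalent, so they have the same Fell neighbourhoods.

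\emph{Conclusion.} Thus $\pi_n$ eventually lies in $U$. These representations are factorial, tracial and non-amenable, which proves density. The only subtle bookkeeping is the quasi-equivalence/weak containment step. It uses that Fell closeness depends only on the weak equivalence class, and Fell neighbourhoods of a direct sum are controlled by finitely many coefficients.
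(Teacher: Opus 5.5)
Your proposal is correct and follows essentially the same route as the paper. Residual finite-dimensionality of $C^*(\Gamma_g^*)$ gives Fell density of finite-dimensional (hence tracial) representations, and Theorem \ref{thm:main-tracial reps} then supplies factorial non-amenable Wasserstein approximants. You also spell out the step the paper leaves implicit: Wasserstein convergence gives pointwise convergence of traces, hence Fell convergence of the GNS representations, which transfers to the representations themselves because quasi-equivalence implies weak equivalence.
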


The main difficulty arising in the present paper, which does not arise in \cite{OrovitzSlutskyVigdorovich2023,ioana2024trace} is, of course, that a surface group is not a free product. However, we strongly exploit the amalgamated free product structure
\begin{align}\label{eq:surface group amalgam}
\Gamma_g\cong F_2*_\Z F_{2g-2},
\end{align}
where $\Z$ is generated by the product of commutators in each of the two components. 
To that end, we develop three methods of perturbation.

The first method is suitable for any group $\Gamma$ admitting a non-abelian free group as a quotient $q:\Gamma\to F_r$. The idea is to define an ``interpolated regular representation'' $\lambda_\eps$ of the free group, which is arbitrarily close to the trivial representation, and yet shares some regularity properties with the regular representation $\lambda$. A general representation $\pi$ of $\Gamma$ is then perturbed by tensoring:
\[
\pi_\eps=\pi\otimes \lambda_\eps.
\]
This alone allows us to obtain:

\begin{proposition}\label{first_method}
    Let $\Gamma$ be a countable group admitting a non-abelian free quotient. Then the space of non-amenable tracial representations is dense in the space of all tracial representations with respect to the Wasserstein topology.
\end{proposition}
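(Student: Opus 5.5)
Given a tracial representation $\pi$ of $\Gamma$ with trace $\tau$ on $M_\pi$ and a quotient $q:\Gamma\to F_r$ with $r\ge 2$, I will perturb $\pi$ by tensoring with a tracial representation of $F_r$ (pulled back through $q$) that is close to the trivial representation but has no almost invariant vectors in a strong sense.

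\textbf{Choice of $\lambda_\eps$.} Let $a_1,\dots,a_r$ be free generators of $F_r$. Take $\lambda_\eps$ to be the tracial representation of $F_r$ sending $a_i$ to a Haar-type unitary $u_i^\eps=\exp(i\eps h_i)$ in $L(F_r)$, or in $L^\infty[0,1]\bar\otimes L(F_r)$, where the $h_i$ are freely independent self-adjoints (for instance semicircular, or $h_i=$ a bounded function of a Haar unitary). Freeness of the $h_i$ makes $u_1^\eps,\dots,u_r^\eps$ $*$-free with diffuse spectral measure, so $W^*(u_i^\eps)\cong L(F_r)$. The representation $\pi_\eps=\pi\otimes(\lambda_\eps\circ q)$ is then tracial, with trace $\tau\otimes\tau_{L(F_r)}$ on $M_\pi\bar\otimes L(F_r)$.

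\textbf{Closeness in Wasserstein.} Couple $\pi$ and $\pi_\eps$ inside $M_\pi\bar\otimes L(F_r)$ via $x\mapsto x\otimes 1$. For each generator $s$ of $\Gamma$ (take a finite or weighted countable generating set as in the definition of the metric),
\[
\|\pi(s)\otimes 1-\pi(s)\otimes \lambda_\eps(q(s))\|_2=\|1-\lambda_\eps(q(s))\|_2 ,
\]
and $q(s)$ is a fixed word in the $a_i$, so this tends to $0$ as $\eps\to 0$ because $\|1-u_i^\eps\|_2\le \eps\|h_i\|_2$. Summing with the weights gives $W_2(\pi,\pi_\eps)\to 0$.

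\textbf{Non-amenability.} This is the main step. Since $\pi_\eps\otimes\bar\pi_\eps=(\pi\otimes\bar\pi)\otimes(\lambda_\eps\otimes\bar\lambda_\eps)\circ q$, it suffices to show that $\lambda_\eps\otimes\bar\lambda_\eps$, viewed as a representation of $F_r$, is weakly contained in the regular representation, or at least has spectral gap. Tensoring any representation with a representation weakly contained in $\lambda$ yields a representation weakly contained in $\lambda$ (Fell absorption), and $\lambda_{F_r}$ has spectral gap for $r\ge2$. Pulled back through a surjection $q$, spectral gap for $\mu=\frac1{2r}\sum(\delta_{a_i}+\delta_{a_i^{-1}})$ transfers to any lift of $\mu$ to $\Gamma$, so the pulled-back representation has no almost invariant vectors.

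To see that $\lambda_\eps$ is weakly contained in $\lambda$, I would use freeness. The standard representation of $L(F_r)$ on $L^2$ is the standard form of a free product of $r$ copies of $L(\Z)$. The representation of $F_r$ generated by $*$-free Haar unitaries in a free product, acting on $L^2$ of the free product with all $L(\Z)$ factors diffuse, should be weakly contained in (indeed a multiple of) the regular representation: $L^2$ decomposes into alternating tensor words, and each $a_i$ acts freely on the reduced-word structure. The part I expect to need care is that the $u_i^\eps$ are not themselves the free generators of $L(F_r)$. I would handle this by applying a unitary equivalence of abelian algebras, since a diffuse unitary generates a copy of $L(\Z)$ and $u_i^\eps$ is unitarily conjugate within $L^\infty[0,1]$ to a Haar unitary when the spectral measure is chosen to be Lebesgue. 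So I would choose $h_i$ so that $u_i^\eps$ has Haar distribution on an arc-length-scaled circle; concretely, take $u_i^\eps$ equal to $v_i^{k}$ for a Haar unitary $v_i$, with a distribution close to $1$ in $L^2$. This makes the representation literally a subrepresentation of an amplification of $\lambda_{F_r}$ restricted via a free embedding, hence weakly contained in $\lambda$. Verifying this free-product decomposition claim rigorously is where I expect the real work to lie.
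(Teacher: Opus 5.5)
\textbf{There is a genuine gap in the non-amenability step.} Your architecture matches the paper's: tensor $\pi$ with a pulled-back representation $\lambda_\eps\circ q$ of $F_r$ that is $\|\cdot\|_2$-close to trivial, and estimate $W_2$ via the coupling $x\mapsto x\otimes 1$. That part is fine. The problem is that your non-amenability step rests on the claim that $\lambda_\eps$ (or $\lambda_\eps\otimes\bar\lambda_\eps$) is weakly contained in $\lambda_{F_r}$, and for your construction this is false.

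\emph{Why weak containment fails.} With $u_i^\eps=\exp(i\eps h_i)$ and $h_i$ bounded, you have $\|u_i^\eps-1\|\le \eps\|h_i\|$ in operator norm. For $\mu=\frac{1}{2r}\sum_i(\delta_{a_i}+\delta_{a_i^{-1}})$ this gives $\|\lambda_\eps(\mu)\|\ge 1-\eps\max_i\|h_i\|$. Weak containment would instead force $\|\lambda_\eps(\mu)\|\le\|\lambda(\mu)\|=\sqrt{2r-1}/r<1$ (Kesten). The same estimate applies to $\lambda_\eps\otimes\bar\lambda_\eps$.

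\emph{Why the proposed repair fails.} A Haar unitary satisfies $\|v-1\|_2=\sqrt2$, so $u_i^\eps$ cannot be both Haar and close to $1$. Also $v_i^k$ is again Haar for $k\neq0$. A non-trace-preserving isomorphism $W^*(u_i^\eps)\cong L\Z$ does not transport the representation on $L^2$ of the free product. That Hilbert space, and the $F_r$-action on it, depend on the states and not only on their measure classes, as the Kesten bound above shows.

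\emph{Fell absorption is applied in the wrong group.} $(\lambda_\eps\otimes\bar\lambda_\eps)\circ q$ is at best weakly contained in $\lambda_{F_r}\circ q=\lambda_{\Gamma/\ker q}$, not in $\lambda_\Gamma$. So tensoring with $\pi\otimes\bar\pi$ is not controlled that way. The correct transfer is Lemma \ref{lem:tensor_permanence}. It only needs $\lambda_\eps$ to be non-amenable as an $F_r$-representation, which is the same as non-amenability of $\lambda_\eps\circ q$, since both have the same image.

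\textbf{What is missing} is a way to force genuinely regular behaviour inside a representation that is only $\|\cdot\|_2$-close to trivial. The paper does this with the power trick of Lemma \ref{lem:choice_of_gen}. It takes $y_i=f(\lambda(g_i))$ with $\|y_i-1\|_2<\eps$ and $y_i^m=\lambda(g_i)^{m^2}$; note this is the reverse of your relation $u=v^k$. Then the restriction of $\lambda_\eps$ to the free subgroup $\langle g_1^m,\dots,g_r^m\rangle$ is a multiple of the regular representation of a non-abelian free group. An $\Ad\lambda_\eps$-invariant state would be invariant for this restriction, which is impossible. Consistently with the obstruction above, $y_i$ is far from $1$ in operator norm.

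Your semicircular choice can also be rescued, via the $C^*$-algebraic argument the paper mentions first in Proposition \ref{prop:free_quotient_tensor_perturb}. An $\Ad\lambda_\eps(F_r)$-invariant state $\Phi$ on $\cB(L^2(M))$ satisfies $\Phi(xT)=\Phi(Tx)$ for all $x$ in the norm-closed algebra $C^*(\lambda_\eps(F_r))$. For $\eps<\pi/2$ one has $C^*(u_i^\eps)=C^*(h_i)$. This contains the Haar unitary $v_i=\exp(2\pi i F(h_i))$, where $F$ is the semicircle distribution function. The $*$-free $v_i$ act on $L^2(M)$ as a multiple of $\lambda_{F_r}$, so $\Phi$ would make $\lambda_{F_r}$ amenable, a contradiction. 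With either fix, non-amenability of $\pi_\eps$ then follows from Lemma \ref{lem:tensor_permanence}.
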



The second method, which we call free conjugation, arises naturally in the context of deformations of amalgamated free products in deformation/rigidity theory \cite{IPP08}, as well as in free liberation processes in free probability theory.
Roughly speaking, one enlarges the von Neumann algebra $M_\pi$ of the given representation $\pi$ to $M_\pi*L^\infty(\T)$ and perturbs part of $M_\pi$ by conjugation with unitaries $u\in L^\infty(\T)$.
In our case, one must  consider the amalgamated free product
\[
M_\pi *_N (N\bar{\otimes} L^\infty(\T)),
\]
where $N$ is the von Neumann algebra of the subgroup $\Z$ appearing in (\ref{eq:surface group amalgam}). This method was also used in \cite{ioana2024trace}, but the presence of the subgroup $\Z$ in the amalgamated free product decomposition of $\Gamma_g$ creates substantial additional difficulties. In technical terms, one must ensure that the von Neumann algebras $\pi(F_2)''$ and $\pi(F_{2g-2})''$ generated by the components in (\ref{eq:surface group amalgam}) are not intertwined into $\pi(\Z)''$ inside $\pi(\Gamma_g)''$ in the sense of Popa. This is part of what makes the first perturbation method (Proposition \ref{first_method})  useful: it removes amenable (and thus type I) summands.

However, even after overcoming this obstacle, the second perturbation method still has a fundamental shortcoming. Namely, we wish to perturb $\pi$ so as to make it factorial, that is, to eliminate the center of $\cZ(M_\pi)$. However, the intersection  \(\cZ(M_\pi)\cap \pi(\Z)''\) is stubborn, in the sense that it remains unchanged under the free conjugation perturbation.

To address this, we develop a third perturbation method which is tailored in a strong way to the defining relation of the surface group. Here one again considers an algebra of the form $M*_{N}(N\bar\otimes L^\infty(\T))$, but now $N$ is the von Neumann algebra generated by a single generator $a$ of a free group $F_2=\langle a,b\rangle$. Moreover, rather than conjugating by $u\in L^\infty(\T)$, one considers right multiplication of $b$ by $u$. The key point is that, by construction, $u$ commutes with $a$, and this gives rise to the commutator identity
$
[a,bu]=[a,b].
$
This allows one to perturb the representation on one of the free groups appearing in the amalgamated free product structure (\ref{eq:surface group amalgam}) without interfering with the other.

The proof of Theorem \ref{thm:main-tracial reps} is obtained by combining these methods. More specifically, we apply first the first perturbation, then the third, then again the first, and finally the second. It is nevertheless possible that the first perturbation is more powerful than we are presently able to show. In other words, it may already possess all the desired properties, so that after applying it to an arbitrary representation, and then applying the second perturbation, one obtains approximating representations with all the required features. Connected to this is the question of whether there is a larger class of amalgamated free products for which the trace space is the Poulsen simplex. 

Finally, a broad and interesting question is which groups admit `meaningful' actions on the Poulsen simplex---say, actions which at the very least do not factor through a finite quotient. Our results imply that the mapping class group $\mathrm{MCG}(\Sigma_g^*)$, identified with the outer automorphism group of $\Gamma_g^*$, acts on the Poulsen simplex $\mathrm{Tr}(\Gamma_g^*)$. In the context of classical and higher Teichm\"uller theory, dynamical systems of this kind have been studied extensively. Natural questions about this action include the study of the \emph{characteristic traces}
$\mathrm{Tr}(\Gamma_g^*)^{\mathrm{MCG}(\Sigma_g^*)}$,
as well as the study of representations of the semidirect product $\mathrm{Aut}(\Gamma_g^*) \ltimes \Gamma_g^*$.

\subsection*{Acknowledgements} The second and third authors would like to thank Pieter Spaas for valuable discussions concerning the problem solved here.

\section{Preliminaries}

Let $\cH$ be a Hilbert space, and let $\cB(\cH)$ denote the algebra of bounded operators on $\cH$. It is endowed with the $*$-operation given by the adjoint, and with the weak operator topology (WOT), defined as follows: a net of bounded operators $T_\alpha$ converges to $T$ in the WOT if and only if
\[
\forall \xi,\eta \in \cH,\qquad \langle T_\alpha\xi,\eta\rangle \to \langle T\xi ,\eta\rangle.
\]
A unital $*$-subalgebra $M\subset \cB(\cH)$ is called a \emph{von Neumann algebra} if it is closed in the weak operator topology. We denote by $\cU(M)$ the group of unitary elements in $M$.

The center of $M$ is
\[
\cZ(M):=\{x\in M:xy=yx\ \text{for all }y\in M\}.
\]
One always has $\C\id_{\cH}\subset \cZ(M)$, and when equality holds we say that $M$ is a \emph{factor}. Equivalently, $M$ has no non-trivial two-sided ideals which are closed in the weak operator topology.

Given a subset $S\subset \cB(\cH)$, its commutant is
\[
S':=\{x\in \cB(\cH):xs=sx\ \text{for all }s\in S\}.
\]
We also write $S'':=(S')'$. Von Neumann's double commutant theorem states that if $S=S^*$ and $\id_{\cH}\in S$, then $S''$ is precisely the smallest von Neumann algebra containing $S$, or equivalently, the WOT-closure of the unital $*$-algebra generated by $S$.

Now let $\Gamma$ be a countable group, and let $\pi:\Gamma\to \cU(\cH)$ be a unitary representation. The associated von Neumann algebra is
\[
M_\pi:=\pi(\Gamma)''=\overline{\pi(\C\Gamma)}^{\mathrm{WOT}}\subset \cB(\cH).
\]
We say that $\pi$ is \emph{factorial} if $M_\pi$ is a factor.

An important example is the left regular representation
\[
\lambda_\Gamma:\Gamma\to \cU(\ell^2(\Gamma)),
\qquad
(\lambda_\Gamma(g)\xi)(h)=\xi(g^{-1}h).
\]
Its associated von Neumann algebra
\[
L\Gamma:=M_{\lambda_\Gamma}
\]
is called the \emph{group von Neumann algebra} of $\Gamma$. It is a factor if and only if $\Gamma$ is ICC, that is, if every non-trivial conjugacy class is infinite. In particular, $LF_r$ is a factor for every free group $F_r$ with $r\geq 2$.

\medskip

A \emph{trace} on a von Neumann algebra $M$ is a linear functional $\tau:M\to \C$ satisfying:
\begin{enumerate}
    \item \emph{normalized:} $\tau(1)=1$;
    \item \emph{positive:} $\tau(x^*x)\geq 0$ for all $x\in M$;
    \item \emph{tracial:} $\tau(xy)=\tau(yx)$ for all $x,y\in M$.
\end{enumerate}
One is typically interested in traces which are moreover:
\begin{enumerate}
    \setcounter{enumi}{3}
    \item \emph{faithful:} if $\tau(x^*x)=0$, then $x=0$;
    \item \emph{normal:} $\tau$ is ultraweakly continuous, or equivalently, WOT-continuous on the (operator norm) unit ball of $M$.
\end{enumerate}

A \emph{tracial von Neumann algebra} is a pair $(M,\tau)$ where $M$ is a von Neumann algebra and $\tau$ is a normal faithful trace. We will often suppress $\tau$ from the notation when it is clear from the context. In the factorial case, whenever such a trace exists, it is unique.

For example, $\cB(\cH)$ admits a trace if and only if $\cH$ is finite-dimensional, in which case necessarily
\[
\tau=\frac{1}{\dim(\cH)}\mathrm{Tr},
\]
where $\mathrm{Tr}$ is the usual matrix trace. Another important example is the group von Neumann algebra $L\Gamma$, which is always tracial, with canonical trace
\[
\tau\!\left(\sum_{g\in \Gamma} a_g u_g\right)=a_e,
\]
where $u_g:=\lambda_\Gamma(g)$. Under the identification $L^2(L\Gamma)\cong \ell^2(\Gamma)$, the elements $\{u_g\}_{g\in\Gamma}$ form an orthonormal basis. Thus every element $x\in L\Gamma$ admits a Fourier expansion
\[
x=\sum_{g\in \Gamma} a_g u_g,
\qquad a_g\in \C,\qquad \sum_{g\in \Gamma}|a_g|^2<\infty.
\]
For example, if $\Gamma=\Z$, then $L\Gamma\cong L^\infty(\T,m_{\mathrm{Leb}})$ where $\T\subset \C$ denotes the unit circle,  and the above is the ordinary Fourier expansion of an element of $L^\infty(\T,m_{\mathrm{Leb}})$.

A tracial von Neumann algebra $(M,\tau)$ is endowed with the inner product
$
\langle x,y\rangle_\tau:=\tau(y^*x),
$
and hence with the $2$-norm
\[
\|x\|_2:=\langle x,x\rangle_\tau^{1/2}=\tau(x^*x)^{1/2}.
\]
The completion of $M$ with respect to this norm is the Hilbert space denoted by $L^2(M,\tau)$, or simply $L^2(M)$. The algebra $M$ acts on $L^2(M)$ by left multiplication, so that we may view
$M\subset \cB(L^2(M))$

If $(M,\tau)$ is a tracial von Neumann algebra, then
\[
L^2(L\Gamma\bar\otimes M)\cong \ell^2(\Gamma)\otimes L^2(M),
\]
and every element $x\in L\Gamma\bar\otimes M$ admits a unique Fourier expansion
\[
x=\sum_{g\in \Gamma} u_g\otimes a_g,
\qquad a_g\in M,\qquad \sum_{g\in \Gamma}\|a_g\|_2^2<\infty.
\]

Given a closed subspace $\cK\subset \cH$ of a Hilbert space, we write $
\cH\ominus \cK$
for the orthogonal complement of $\cK$ in $\cH$. In particular, if $N\subset M$ are tracial von Neumann algebras, then
$
L^2(M)\ominus L^2(N)
$
denotes the orthogonal complement of $L^2(N)$ inside $L^2(M)$.

A von Neumann algebra $M\subset\cB(\cH)$ is called \emph{diffuse} if it has no minimal projections. Assuming $\cH$ is separable (which will be the case throughout), we say that $M$ is of \emph{type I} if it is isomorphic to a direct sum of algebras of the form $L^\infty(X_i, \mu_i) \bar\otimes \cB(\cH_i)$. More generally, if $\{z_i\}_{i\in I}\subset \cZ(M)$ are central projections with $\sum_i z_i=1$, then $M$ decomposes as the direct sum
\[
M=\bigoplus_{i\in I} Mz_i.
\]
We say that $M$ has \emph{no type I direct summand} if there is no nonzero central projection $z\in \cZ(M)$ such that $Mz$ is of type I.

\medskip

Besides tensor products, one also has the notion of free  products. Given tracial von Neumann algebras $(M_i,\tau_i)$, write
\[
M_i^\circ:=L^2(M_i)\ominus \C 1.
\]
Their free product $M_1*M_2$ is realized on the Fock space
\[
L^2(M_1*M_2)
=
\C 1
\oplus
\bigoplus_{k\geq 1}
\bigoplus_{i_1\neq \cdots \neq i_k}
M_{i_1}^\circ \otimes \cdots \otimes M_{i_k}^\circ .
\]
This extends the notion of free products of groups in the sense that
\[
L(\Gamma_1*\Gamma_2)\cong L\Gamma_1 * L\Gamma_2.
\]

Two subalgebras \(P_1,P_2\) of a tracial von Neumann algebra \((M,\tau)\) are called \emph{freely independent} if
\[
\tau(x_1x_2\cdots x_k)=0
\]
whenever $k\geq 1$, \(x_j\in P_{i_j}\ominus \C 1\) and \(i_j\neq i_{j+1}\) for all \(j\). That is, alternating centered words have trace zero.

Likewise, one has the notion of amalgamated free products (see \cite{Po93,VDN92})
\[
M_1 *_N M_2,
\] where $(M_i,\tau_i)$ are tracial von Neumann algebras with a common subalgebra $N$ such that ${\tau_1}_{|N}={\tau_2}_{|N}$.
This construction parallels amalgamated free products of groups: $L(\Gamma_1*_{\Lambda}\Gamma_2)\cong L(\Gamma_1)*_{L(\Lambda)}L(\Gamma_2)$.
 Whenever \(N\subset M\) are tracial von Neumann algebras, the orthogonal projection
\[
L^2(M)\to L^2(N)
\]
gives rise to a map
\[
E_N:M\to N,
\]
called the \emph{conditional expectation}. Writing
\[
M_i^\circ:=L^2(M_i)\ominus L^2(N),
\]
the amalgamated free product \(M_1 *_N M_2\) is built from alternating tensors over \(N\) of the spaces \(M_i^\circ\). In particular, \(M_1 *_N M_2\) is the WOT-closure of the space 
spanned by \(N\) together with reduced words
\[
x_1x_2\cdots x_k,
\]
where \(x_j\in M_{i_j}\), \(E_N(x_j)=0\), and adjacent letters come from alternating algebras, that is, \(i_j\neq i_{j+1}\) for all \(j\).

For further reference, we record the following elementary lemma:

\begin{lemma}\label{intersect}
Let $M=M_1*_NM_2$ be as above. Let $u\in M_2$ is a unitary such that $u-E_N(u)$ is invertible (e.g., if $\|E_N(u)\|<1$). Then $M_1\cap uM_1u^{-1}\subset N$.

\end{lemma}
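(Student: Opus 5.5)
Write $u_0:=E_N(u)$ and $v:=u-u_0$, so that $v\in M_2$ satisfies $E_N(v)=0$. By hypothesis $v$ is invertible in $M_2$ (if $\|u_0\|<1$ this holds because $\|u^*u_0\|<1$, so $v=u(1-u^*u_0)$ is invertible). Let $x\in M_1\cap uM_1u^{-1}$. Then $x=uyu^*$ for some $y\in M_1$, that is,
\[
xu=uy.
\]
The plan is to decompose everything along $N$ and compare components in the orthogonal decomposition of $L^2(M)$ into reduced-word subspaces. Write $x=x_0+x_1$ and $y=y_0+y_1$, where $x_0=E_N(x)$, $y_0=E_N(y)$, and $x_1,y_1\in M_1$ have zero expectation.

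Expanding the identity $xu=uy$ gives
\[
x_0u_0+x_0v+x_1u_0+x_1v \;=\; u_0y_0+u_0y_1+vy_0+vy_1 .
\]
Now sort each term by its reduced-word type.
\begin{itemize}
\item $x_1u_0$ and $u_0y_1$ lie in $M_1^\circ$, i.e.\ in $L^2(M_1)\ominus L^2(N)$, which is an $N$-bimodule.
\item $x_0v$ and $vy_0$ lie in $M_2^\circ$.
\item $x_1v$ lies in the space of reduced words of type $(1,2)$.
\item $vy_1$ lies in the space of reduced words of type $(2,1)$.
\item $x_0u_0$ and $u_0y_0$ lie in $N$.
\end{itemize}
These five subspaces of $L^2(M)$ are mutually orthogonal by the Fock-space structure of the amalgamated free product.

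Comparing the components of type $(1,2)$ gives $x_1v=0$. Since $v$ is invertible, $x_1=x_1vv^{-1}=0$, so $x=x_0\in N$.

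Two points need checking, and they are where the only real care is required.
\begin{itemize}
\item \emph{Orthogonality.} The type-$(1,2)$ component of $x_1v$ is exactly $x_1v$, and no other term contributes to that subspace. This holds because the spaces of reduced words of distinct types are orthogonal in $L^2(M_1*_NM_2)$, and $L^2$-products such as $x_1v$ with $x_1\in M_1$, $v\in M_2$ of zero expectation land in $\overline{M_1^\circ\otimes_N M_2^\circ}$. One verifies this by computing $E_N$ of products of alternating centered words, using the standard freeness-with-amalgamation property. This is routine.
\item \emph{Cancellation.} Invertibility is used in the algebra $M$: $v^{-1}\in M_2\subset M$, so right multiplication by $v^{-1}$ is legitimate on the operator identity $x_1v=0$.
\end{itemize}

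For the parenthetical example, if $\|E_N(u)\|<1$ then $v=u(1-u^*E_N(u))$ is invertible by a Neumann series, so the lemma applies.
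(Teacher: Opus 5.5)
Your proof is correct and is essentially the paper's argument: both start from $xu=uy$ and use that the $(M_1\ominus N)(M_2\ominus N)$ and $(M_2\ominus N)(M_1\ominus N)$ components are orthogonal to force $(x-E_N(x))(u-E_N(u))=0$, and then invert $u-E_N(u)$. The only cosmetic difference is how the $N$, $M_1^\circ$ and $M_2^\circ$ parts are removed: the paper applies $E_{M_1}$ and then $E_{M_2}$, whereas you read off the length-two $(1,2)$ component directly from the full Fock-space expansion.
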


\begin{proof}
    Let $x\in M_1\cap uM_1u^{-1}$ and put $y=u^{-1}xu$. Then $y\in M_1$ and $xu=uy$. By applying $E_{M_1}$ and using that $E_{M_1}(u)=E_N(u)$, we get that $xE_N(u)=E_N(u)y$, hence $x(u-E_N(u))=(u-E_N(u))y$. By applying $E_{M_2}$ and using that $E_{M_2}(z)=E_N(z)$, for every $z\in M_1$, we also get that $E_N(x)(u-E_N(u))=(u-E_N(u))E_N(y)$ and thus $(x-E_N(x))(u-E_N(u))=(u-E_N(u))(y-E_N(y))$.

    By the definition of amalgamated free products we have that $(M_1\ominus N)(M_2\ominus N)$ is orthogonal to $(M_2\ominus N)(M_1\ominus N)$.
 Hence, $(x-E_N(x))(u-E_N(u))=0$.   
Since $u-E_N(u)$ is invertible, we get that $x-E_N(x)=0$ and so $x\in N$, as claimed.
\end{proof}

A unitary $u$ in a tracial von Neumann algebra $(M,\tau)$ is called a \emph{Haar unitary} if
\[
\tau(u^n)=0 \qquad \text{for all } n\in \Z\setminus\{0\}.
\]
Equivalently, the spectral measure of $u$ with respect to $\tau$ is the Haar measure on the circle $\T$.

More generally, if $x\in M$ is normal, its \emph{spectral measure} $\mu_x$ is the unique Borel probability measure on the spectrum $\sigma(x)$ such that
\[
\tau(f(x))=\int_{\sigma(x)} f(z)\,d\mu_x(z)
\]
for every continuous function $f$ on $\sigma(x)$.

\medskip

We now return to representations. A unitary representation $\pi:\Gamma\to \cU(\cH)$ is called \emph{tracial} if its von Neumann algebra $M_\pi$ admits a normal faithful trace $\tau$. In that case, the function
\[
\chi_\pi:\Gamma\to \C,\qquad \chi_\pi(g):=\tau(\pi(g)),
\]
is a normalized positive-definite class function on $\Gamma$,  called the \emph{trace} associated with $\pi$. Conversely, every such function arises in this way via the GNS construction. Thus, tracial representations, up to quasi-equivalence, correspond to traces on $\Gamma$.

We denote by $\mathrm{Tr}(\Gamma)$ the space of all traces on $\Gamma$. It is a compact convex subset of $\ell^\infty(\Gamma)$ in the topology of pointwise convergence. Its extreme points are called \emph{characters}, and we denote the set of characters by $\mathrm{Ch}(\Gamma)$. A tracial representation is factorial if and only if its associated trace is a character.

Moreover, $\mathrm{Tr}(\Gamma)$ is a Choquet simplex. This means that every trace $\varphi\in \mathrm{Tr}(\Gamma)$ admits a unique representing probability measure, $\mu_\varphi$, on the set of extreme points $\mathrm{Ch}(\Gamma)$, in the sense that
\[
\varphi(g)=\int_{\mathrm{Ch}(\Gamma)} \chi(g)\,d\mu_\varphi(\chi)
\qquad \text{for all } g\in \Gamma.
\]
In other words, every trace decomposes uniquely as a barycenter of characters. We refer to \cite{BlackadarRordam2024} for a concise proof of this standard fact, and to \cite{Phelps2001} for the general theory of Choquet simplices.

Finally, let us briefly recall the Wasserstein topology. Assume for simplicity that $\Gamma$ is finitely generated, and fix a finite generating set $S=\{s_1,\dots,s_d\}$. Given two tracial representations $\pi,\rho$ of $\Gamma$, one defines their Wasserstein distance by
\[
W_2(\pi,\rho)
:=
\inf
\left(
\sum_{j=1}^d \|u_j-v_j\|_2^2
\right)^{1/2},
\]
where the infimum is taken over all realizations of $\pi$ and $\rho$ inside a common tracial von Neumann algebra $(M,\tau)$, with
\[
u_j=\pi(s_j),\qquad v_j=\rho(s_j).
\]
Equivalently, this defines a metric on traces on $\Gamma$. The resulting topology is called the \emph{Wasserstein topology}, introduced in \cite{BianeVoiculescu2001}. It is in general strictly stronger than the topology of pointwise convergence on $\mathrm{Tr}(\Gamma)$; in fact, it is often not separable \cite{gangbo2022duality}.

\section{Three perturbation methods}

In the course of proving Theorem \ref{thm:main-tracial reps}, we developed several perturbation methods, each with its own advantages. It is the combination of all three, used in a particular way, that allows us to obtain the final result. In this section, we introduce each of these methods and discuss their main properties.

\subsection{Perturbation I: interpolated regular representation tensoring}\label{subsection 3.1}

Here we develop a perturbation suitable for groups having a non-abelian free group as a quotient. 

\subsubsection{Quasi-Haar unitaries}
Recall that a unitary $x$ in a tracial von Neumann algebra $(M,\tau)$ is called a Haar unitary if $\tau(x^n)$ is $0$ for all $n\ne 0$ and $1$ otherwise. Equivalently, the spectral distribution $\mu_x\in \mathrm {Prob}(\T)$ given by $\mu_x(f)=\tau(f(x))$ is the Lebesgue measure on $\T\subset \C$.
\begin{definition}
    A unitary element $x$ in a tracial von Neumann algebra $(M,\tau)$ is said to be a \emph{quasi-Haar unitary} if its spectral measure $\mu_x$ is equivalent to (i.e., has the same null sets as) the Lebesgue measure on the circle.
\end{definition}

\begin{lemma}\label{lem:choice_of_gen}
    Let $x$ be a Haar unitary in a tracial von Neumann algebra $(M,\tau)$. For any $\eps > 0$, there exists $y\in M$ such that
    \begin{enumerate}
        \item $y$ is a quasi-Haar unitary.
        \item $\|y-1\|_2<\eps$.
        \item There exists an integer $m$ such that $y^m = x^{m^2}$. 
    \end{enumerate}
\end{lemma}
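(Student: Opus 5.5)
We will construct $y$ by Borel functional calculus applied to $x$. Since $x$ is a Haar unitary, the von Neumann algebra $\{x\}''\subset M$ is identified, trace-preservingly, with $L^\infty(\T,m_{\mathrm{Leb}})$ (normalized Lebesgue measure), with $x$ corresponding to the identity function $z$. It therefore suffices to find a Borel function $f:\T\to\T$ with three properties: $f(z)^m=z^{m^2}$ a.e. for some integer $m$; $\|f-1\|_{L^2(\T)}<\eps$; and the pushforward $f_*m_{\mathrm{Leb}}$ equivalent to Lebesgue measure. We then set $y=f(x)$. It is unitary since $|f|=1$, and its spectral measure is $\mu_y=f_*m_{\mathrm{Leb}}$.

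The constraint $f^m=z^{m^2}$ forces $f(z)=\omega(z)z^m$ with $\omega(z)$ an $m$-th root of unity, chosen measurably. Write $z=e^{i\theta}$ with $\theta\in[0,2\pi)$, and fix a large integer $m$. We define
\[
f(e^{i\theta})=\begin{cases} e^{im\theta}, & \theta\in[0,2\pi/m),\\ \exp\!\big(i(m\theta-2\pi k(\theta)/m)\big), & \theta\in[2\pi/m,2\pi),\end{cases}
\]
where $k(\theta)=\lfloor m^2\theta/2\pi\rfloor$. On the second region the argument $m\theta-2\pi k(\theta)/m$ lies in $[0,2\pi/m)$. In both cases $f(z)^m=e^{im^2\theta}e^{-2\pi i k}=z^{m^2}$, so condition (3) holds.

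For the $L^2$ estimate, note that $|f-1|\le 2\pi/m$ on the second region, while $|f-1|\le 2$ on the first region, which has normalized measure $1/m$. Hence
\[
\|y-1\|_2^2\le (2\pi/m)^2+4/m,
\]
and this is less than $\eps^2$ for $m$ large.

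For the quasi-Haar property, observe that on each of the finitely many intervals of the partition, $f$ is given by $\theta\mapsto m\theta+\mathrm{const}$. This map is affine with nonzero derivative, so the pushforward of Lebesgue measure restricted to each interval is absolutely continuous. Consequently $f_*m_{\mathrm{Leb}}\ll m_{\mathrm{Leb}}$. Conversely, on the first interval $[0,2\pi/m)$ the map $\theta\mapsto m\theta$ is a bijection onto $[0,2\pi)$. The pushforward of that piece alone is therefore $\tfrac1m m_{\mathrm{Leb}}$, which gives $f_*m_{\mathrm{Leb}}\ge\frac1m m_{\mathrm{Leb}}$. The two measures thus have the same null sets.

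The only real obstacle is reconciling conditions (1) and (2). A branch choice that keeps $f$ uniformly close to $1$ pushes Lebesgue measure into a small arc, which destroys the quasi-Haar property. This is resolved by sacrificing a set of measure $1/m$, on which $f$ is allowed to wind once around the whole circle. That costs only $O(m^{-1/2})$ in $2$-norm.
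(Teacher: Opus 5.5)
Your proposal is correct and follows essentially the same route as the paper: both pass to $\{x\}''\cong L^\infty(\T)$ and use the same piecewise function, which winds once around the circle on one arc of length $2\pi/m$ and maps each remaining arc of length $2\pi/m^2$ affinely into the arc of arguments $[0,2\pi/m)$. This gives the identical bound $\frac{4\pi^2}{m^2}+\frac{4}{m}$. The only differences are that you put the winding arc at the start rather than at the end, and that you write out the quasi-Haar verification ($\frac1m m_{\mathrm{Leb}}\le f_*m_{\mathrm{Leb}}\ll m_{\mathrm{Leb}}$), which the paper leaves as straightforward.
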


\begin{proof}
    Let $\eps > 0$. Fix a positive integer $m$ s.t. $\frac{4\pi^2}{m^2} + \frac{4}{m} < \eps^2$.  The von Neumann algebra generated by $x$ is identified in a trace-preserving manner with $L^\infty(\T,\mu)$ where $\mu$ is the Haar measure on $\T$, such that $x$ is mapped to the identity map $\T\to \T$. Thus, the statement boils down to constructing a function $f:\T\to \T$ s.t.
    \begin{enumerate}
        \item $f_\ast(\mu)$ is equivalent to $\mu$.
        \item $\int|f(z) - 1|^2\,d\mu=\int |z - 1|^2 \, f_\ast(d\mu) < \eps^2$.
        \item For some positive integer $m$, $f(z)^m = z^{m^2}$ for all $z \in \T$.
    \end{enumerate}

    For, after we obtain such an $f$, we can simply set $y = f(x)$. Now, for $0 \leq \theta < 2\pi$, we define
    \begin{equation*}
        f(e^{i\theta}) = \begin{cases}
            e^{im\left(\theta - \frac{2k\pi}{m^2}\right)} &,\text{ if }\frac{2k\pi}{m^2} \leq \theta < \frac{2(k + 1)\pi}{m^2}\text{ for some integer }0 \leq k < m^2 - m\\
            e^{im\left(\theta - \frac{2(m - 1)\pi}{m}\right)} &,\text{ otherwise}
        \end{cases}.
    \end{equation*}
    It is straightforward to check that this indeed satisfies the desired requirements on $f$.
\end{proof}

We therefore have:
\begin{lemma}\label{lem:interporlated regular representations}
    Let $F_r=\langle g_1 ,...,g_r\rangle$ be the free group on $r\geq 1$ generators. Then for any $\eps>0$ there exists a representation $\lambda_\eps:F_r\to \cU(M) = \cU(LF_r)$ satisfying:
    \begin{enumerate}
        \item Each of the unitaries $\lambda_\eps(g_i)$ is quasi-Haar.
        \item $\|\lambda_\eps(g_i)-1\|_2<\eps$ for each $i=1,...,r$.
        \item There exists $m\in \N$ such that  $\lambda_\eps(g_i^m)=\lambda(g_i)^{m^2}$ for each $1\leq i\leq r$. 
    \end{enumerate}
\end{lemma}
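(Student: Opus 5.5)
The plan is to apply Lemma \ref{lem:choice_of_gen} to each free generator separately, inside the group von Neumann algebra $LF_r$, and then to use the universal property of the free group. Let $\lambda=\lambda_{F_r}$ be the left regular representation, and write $u_i:=\lambda(g_i)\in \cU(LF_r)$. Each $u_i$ is a Haar unitary, because the canonical trace gives $\tau(u_i^n)=\tau(u_{g_i^n})=0$ for $n\neq 0$, since $g_i^n\neq e$ in the free group.

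Fix $\eps>0$. The construction in the proof of Lemma \ref{lem:choice_of_gen} depends only on an integer $m$, which is chosen with $\frac{4\pi^2}{m^2}+\frac4m<\eps^2$. It then sets $y=f_m(x)$, where $f_m:\T\to\T$ is a function that does not depend on $x$. I will therefore fix one such $m$ for all generators at once, and put $y_i:=f_m(u_i)\in \{u_i\}''\subset LF_r$ for each $i=1,\dots,r$. The lemma then gives the following for each $i$:
\begin{itemize}
\item $y_i$ is a quasi-Haar unitary;
\item $\|y_i-1\|_2<\eps$;
\item $y_i^m=u_i^{m^2}$.
\end{itemize}
If I preferred to use Lemma \ref{lem:choice_of_gen} purely as a black box, with an integer $m_i$ depending on $i$, I would simply note that any sufficiently large $m$ satisfies the inequality. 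So a common $m$ can always be chosen. This uniformity of $m$ across generators is the only point that needs care.

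Since $F_r$ is free on $g_1,\dots,g_r$, there is a unique homomorphism $\lambda_\eps:F_r\to\cU(LF_r)$ with $\lambda_\eps(g_i)=y_i$. Properties (1) and (2) of the statement are then immediate from the first two items above. Property (3) follows because $\lambda_\eps$ is a homomorphism: $\lambda_\eps(g_i^m)=\lambda_\eps(g_i)^m=y_i^m=u_i^{m^2}=\lambda(g_i)^{m^2}$.

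No real obstacle is expected; the argument is a direct consequence of Lemma \ref{lem:choice_of_gen} and freeness.
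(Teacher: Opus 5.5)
Your proposal is correct and is essentially the paper's proof: apply Lemma \ref{lem:choice_of_gen} to each Haar unitary $\lambda(g_i)$ inside $LF_r$ and define $\lambda_\eps$ on the free generators by the universal property. You are right that item (3) needs a common $m$, and the justification that works is the one through the construction: $f_m$ depends only on $m$, and any $m$ with $\frac{4\pi^2}{m^2}+\frac{4}{m}<\eps^2$ will do. The paper leaves this implicit, and your ``black box'' remark is slightly loose, since the lemma's statement alone, with separate $m_i$, would not directly give a common exponent of the form $m^2$.
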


\begin{proof}
    Let $g_1,...,g_r$ denote the standard generators. Let $\lambda:F_r\to LF_r$ denote the regular representation, and set $x_i=\lambda(g_i)$. Let $y_i\in LF_r$ be as provided in the previous lemma, and set $\lambda_\eps:g_i\mapsto y_i$. 
\end{proof}

\subsubsection{Amenable representations}

We recall Bekka's notion of amenability of unitary representations.

\begin{lemma}\label{lem:bekka_amenable_rep}
For a unitary representation $\pi:\Gamma\to \cU(\cH)$ of a group $\Gamma$, the following are equivalent:
\begin{enumerate}
    \item $\pi$ is \emph{amenable}: there exists a state on $\cB(\cH)$ that is invariant under the adjoint action
    \[
        \Ad(\pi(g)): \cB(\cH)\to \cB(\cH),\qquad T\mapsto \pi(g)T\pi(g)^* .
    \]
    \item $\pi\otimes \bar{\pi}$ has almost invariant vectors.
    \item $\pi\otimes \sigma$ has almost invariant vectors for some unitary representation $\sigma$ of $\Gamma$.
\end{enumerate}
\end{lemma}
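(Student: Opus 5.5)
I will prove the cycle of implications (1)$\Rightarrow$(2)$\Rightarrow$(3)$\Rightarrow$(1). Throughout, I use the standard identification of $\cH\otimes\bar\cH$ with the Hilbert--Schmidt operators $HS(\cH)$. Under this identification, $\pi\otimes\bar\pi$ becomes conjugation $T\mapsto \pi(g)T\pi(g)^*$, and $\|T\|_{HS}^2=\mathrm{Tr}(T^*T)$.

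The implication (2)$\Rightarrow$(3) is trivial: take $\sigma=\bar\pi$.

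For (3)$\Rightarrow$(1), let $\xi_n\in \cH\otimes\cK$ be unit vectors that are almost invariant under $\pi\otimes\sigma$, where $\sigma$ acts on $\cK$. Define states on $\cB(\cH)$ by
\[
\phi_n(T)=\langle (T\otimes 1)\xi_n,\xi_n\rangle .
\]
Because $\pi(g)^*T\pi(g)\otimes 1$ equals $(\pi\otimes\sigma)(g)^*(T\otimes1)(\pi\otimes\sigma)(g)$, we get
\[
|\phi_n(\pi(g)^*T\pi(g))-\phi_n(T)|\le 2\|T\|\,\|(\pi\otimes\sigma)(g)\xi_n-\xi_n\|\to0 .
\]
Any weak$^*$ cluster point of $(\phi_n)$ is then an $\Ad\pi$-invariant state. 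This is exactly Bekka's argument. If $\Gamma$ is uncountable, I replace the sequence by a net indexed by pairs (finite set, $\eps$).

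For (1)$\Rightarrow$(2), which is the main obstacle, I follow Bekka's route through a Day/Namioka-type convexity argument.

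1. Normal states on $\cB(\cH)$ are of the form $\mathrm{Tr}(\rho\,\cdot)$ with $\rho\ge0$ trace class and $\mathrm{Tr}\rho=1$. They are weak$^*$ dense in the state space.
2. Given an invariant state $\Phi$, choose a net of such density matrices $\rho_i$ with $\mathrm{Tr}(\rho_i\,\cdot)\to\Phi$ weak$^*$.
3. It follows that $\pi(g)\rho_i\pi(g)^*-\rho_i\to0$ weakly in the predual $S_1(\cH)$, for each $g$.
4. For a finite set $F\subset\Gamma$, the map $\rho\mapsto(\pi(g)\rho\pi(g)^*-\rho)_{g\in F}$ goes into $S_1(\cH)^F$. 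The image of the convex set of density matrices is convex, and its weak closure contains $0$.
5. By Hahn--Banach, weak and norm closures of convex sets coincide, so $0$ lies in the norm closure. Hence there are density matrices $\rho$ with $\|\pi(g)\rho\pi(g)^*-\rho\|_1<\delta$ for all $g\in F$.
6. Pass to $\eta=\rho^{1/2}\in HS(\cH)$, a unit vector. The Powers--St{\o}rmer inequality gives
\[
\|a^{1/2}-b^{1/2}\|_{HS}^2\le\|a-b\|_1
\]
for positive trace-class $a,b$. Since $(\pi(g)\rho\pi(g)^*)^{1/2}=\pi(g)\rho^{1/2}\pi(g)^*$, this yields
\[
\|(\pi\otimes\bar\pi)(g)\eta-\eta\|\le\delta^{1/2}\quad\text{for } g\in F .
\]
So $\pi\otimes\bar\pi$ has almost invariant vectors.

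The delicate points are the convexity/Hahn--Banach step 5 and the use of Powers--St{\o}rmer in step 6, which is what lets us pass from $S_1$-almost invariance to $HS$-almost invariance. Everything else is bookkeeping.
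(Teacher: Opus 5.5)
Your proof is correct, but it is organized differently from the paper's. The paper cites Bekka for (1)$\Leftrightarrow$(2) and Popa's Lemma~3.2 for (3)$\Rightarrow$(2). You instead prove the cycle (1)$\Rightarrow$(2)$\Rightarrow$(3)$\Rightarrow$(1). Your (1)$\Rightarrow$(2) is essentially Bekka's argument: weak$^*$ density of normal states, equality of weak and norm closures of convex sets, then Powers--St{\o}rmer. So the real difference is how you close the loop. You use the elementary restriction-of-vector-states argument for (3)$\Rightarrow$(1), the same idea the paper uses in Lemma~\ref{lem:tensor_permanence}, instead of Popa's direct Hilbert-space passage (3)$\Rightarrow$(2).

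Your route buys self-containment: everything except (1)$\Rightarrow$(2) is soft. The cost is that (3)$\Rightarrow$(2) becomes non-constructive, since it is routed through a possibly non-normal invariant state and Hahn--Banach.

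Popa's direct argument is in fact already latent in yours. Your states $\phi_n$ are normal, with density matrix $\rho_n$ the partial trace over $\cK$ of the rank-one projection onto $\xi_n$. Your estimate holds uniformly over $\|T\|\le 1$, so it is really the trace-norm bound $\|\pi(g)\rho_n\pi(g)^*-\rho_n\|_1\le 2\|(\pi\otimes\sigma)(g)\xi_n-\xi_n\|$. Feeding $\rho_n$ directly into your step 6 gives, in the Hilbert--Schmidt norm, $\|(\pi\otimes\bar\pi)(g)\rho_n^{1/2}-\rho_n^{1/2}\|^2\le 2\|(\pi\otimes\sigma)(g)\xi_n-\xi_n\|$. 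That is (3)$\Rightarrow$(2) with explicit quantitative control, and it bypasses your steps 2--5 for that implication.
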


\begin{proof}
The equivalence between (1) and (2) is proved in \cite{bekka1990amenable}.
The implication \emph{(2)$\Rightarrow$(3)} is immediate, and \emph{(3)$\Rightarrow$(2)} is proved in \cite[Lemma~3.2]{popa2008superrigidity}.
\end{proof}

\begin{lemma}\label{lem:tensor_permanence}
Let $\sigma$ be a unitary representation of $\Gamma$.
If $\pi\otimes \sigma$ is amenable for some unitary representation $\pi$, then $\sigma$ is amenable.
Equivalently, if $\sigma$ is non-amenable then $\pi\otimes \sigma$ is non-amenable for every $\pi$.
\end{lemma}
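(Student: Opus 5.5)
Suppose $\pi\otimes\sigma$ is amenable. By Lemma \ref{lem:bekka_amenable_rep}, (1)$\Leftrightarrow$(2), this means $(\pi\otimes\sigma)\otimes\overline{(\pi\otimes\sigma)}$ has almost invariant vectors. The plan is to regroup this fourfold tensor product as $\sigma\otimes\rho$ for a suitable unitary representation $\rho$, and then apply criterion (3) of Lemma \ref{lem:bekka_amenable_rep} to $\sigma$.

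Concretely, there is a canonical unitary equivalence
\[
(\pi\otimes\sigma)\otimes\overline{(\pi\otimes\sigma)}\;\cong\;(\pi\otimes\sigma)\otimes(\bar\pi\otimes\bar\sigma)\;\cong\;\sigma\otimes(\pi\otimes\bar\pi\otimes\bar\sigma),
\]
given by permuting tensor factors. It intertwines the two $\Gamma$-actions, since the diagonal action of $g$ is $\pi(g)\otimes\sigma(g)\otimes\bar\pi(g)\otimes\bar\sigma(g)$ and the flip unitaries commute with elementary tensors of operators in the obvious way. Unitary equivalence preserves the existence of almost invariant vectors. So with $\rho:=\pi\otimes\bar\pi\otimes\bar\sigma$, the representation $\sigma\otimes\rho$ has almost invariant vectors, and (3)$\Rightarrow$(1) of Lemma \ref{lem:bekka_amenable_rep} gives that $\sigma$ is amenable. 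The second formulation is just the contrapositive.

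There is no real obstacle: the only nontrivial input is the implication (3)$\Rightarrow$(2), which the paper takes from \cite[Lemma~3.2]{popa2008superrigidity}. The one point to check is the identification $\overline{\pi\otimes\sigma}\cong\bar\pi\otimes\bar\sigma$, which follows from the conjugate-linear identity map $\overline{\cH\otimes\cK}\to\bar\cH\otimes\bar\cK$.

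An alternative, avoiding (3), uses (1) directly. If $\Phi$ is an $\Ad(\pi\otimes\sigma)$-invariant state on $\cB(\cH_\pi\otimes\cH_\sigma)$, then $T\mapsto\Phi(1\otimes T)$ is a state on $\cB(\cH_\sigma)$. It is $\Ad\sigma$-invariant because
\[
\Ad\big((\pi\otimes\sigma)(g)\big)(1\otimes T)=1\otimes\sigma(g)T\sigma(g)^*.
\]
I would present this version, since it is a one-line argument, and mention the tensor-regrouping argument as the equivalent route.
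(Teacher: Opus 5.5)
Your proposal is correct, and the one-line argument you say you would present (restricting an $\Ad(\pi\otimes\sigma)$-invariant state on $\cB(\cH_\pi\otimes\cH_\sigma)$ to $1\otimes\cB(\cH_\sigma)$) is exactly the paper's proof. Your tensor-regrouping route through criterion (3) is also valid. However, it relies on the nontrivial implication (3)$\Rightarrow$(2) cited from Popa, whereas the state-restriction argument needs only definition (1).
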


\begin{proof}
Assume $\pi\otimes \sigma$ is amenable where $\pi: \Gamma \to \cU(\cK)$ and $\sigma: \Gamma \to \cU(\cH)$. Then there is a state on $\cB(\cK \otimes \cH) = \cB(\cK) \bar\otimes \cB(\cH)$ invariant under the adjoint action of $\Gamma$ induced by $\pi \otimes \sigma$. Restricting to $\cB(\cH) = 1 \otimes \cB(\cH) \subset \cB(\cK) \bar\otimes \cB(\cH)$ yields a state on $\cB(\cH)$ invariant under the adjoint action of $\Gamma$ induced by $\sigma$. Thus, $\sigma$ is amenable.
\end{proof}

We say that a trace $\varphi$ on $\Gamma$ is \emph{amenable} if the corresponding tracial representation  $\pi_\varphi$ is amenable in the sense of
Lemma~\ref{lem:bekka_amenable_rep}.
Note that this is not equivalent to requiring amenability of the von Neumann algebra generated by $\pi_\varphi(\Gamma)$; the latter
requirement corresponds to \emph{uniform} amenability of the trace.

\begin{lemma}\label{lem:no_amenable_summand}
Let $\pi$ be a tracial representation with corresponding von Neumann algebra $(M,\tau)$ and trace $\varphi=\tau\circ \pi $.
If $\varphi$ is non-amenable, then $M$ has no amenable direct summand.
In particular, $M$ has no type~I direct summand.
\end{lemma}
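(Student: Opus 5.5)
We argue by contraposition: assuming $M$ has a nonzero central projection $z$ with $Mz$ amenable, we will show that $\varphi$ is amenable. The tracial representation $\pi$ may be taken to be the GNS representation of $\varphi$, acting on $L^2(M,\tau)$. Its commutant is then the right action $JMJ$, so the center $\cZ(M)$ commutes with everything in sight. Put $\pi_z(g)=\pi(g)z$, viewed as a representation on $zL^2(M)=L^2(Mz)$. This is a subrepresentation of $\pi$, because $z$ commutes with $\pi(\Gamma)$. Moreover $\pi_z(\Gamma)''=Mz$ is amenable.

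The first step is to show that $\pi_z$ is amenable in Bekka's sense. Since $Mz$ is an amenable von Neumann algebra, it is injective. Its standard form is $L^2(Mz)$ with commutant $J(Mz)J$, and that commutant is injective as well. So there is a conditional expectation $\Phi:\cB(L^2(Mz))\to (Mz)'$, or equally one onto $Mz$ composed with the commutant. The cleaner route is to use the hypertrace characterization: because $Mz$ is amenable, there is a state $\psi$ on $\cB(L^2(Mz))$ with $\psi(xT)=\psi(Tx)$ for all $x\in Mz$ and $T\in\cB(L^2(Mz))$. This $\psi$ is the Connes hypertrace, extending the normalized trace $\tau(\cdot z)/\tau(z)$. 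For unitaries $u=\pi_z(g)\in Mz$ we then get $\psi(uTu^*)=\psi(Tu^*u)=\psi(T)$. Hence $\psi$ is an $\Ad\pi_z(\Gamma)$-invariant state, which is condition (1) of Lemma~\ref{lem:bekka_amenable_rep}, so $\pi_z$ is amenable.

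The second step passes from $\pi_z$ to $\pi$. Let $P=z\in\cB(L^2(M))$ denote the projection onto the subrepresentation. Define the state $\tilde\psi(T)=\psi(PTP|_{zL^2})$ on $\cB(L^2(M))$. It is invariant under $\Ad\pi(g)$, because $\pi(g)$ commutes with $P$ and restricts to $\pi_z(g)$. Therefore $\pi$ is amenable, i.e.\ $\varphi$ is amenable, which contradicts the hypothesis. One could equivalently note that $\pi_z\otimes\bar\pi_z\subset\pi\otimes\bar\pi$ and then use (2), but the state argument is simpler.

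For the final assertion, suppose some $Mz$ with $z\neq0$ were of type I. Type I von Neumann algebras are amenable (injective), so $Mz$ would be an amenable direct summand, which is excluded by the first part. The only non-formal input is that amenability of $Mz$ yields a hypertrace. This is Connes' theorem (injective $\Leftrightarrow$ hypertrace $\Leftrightarrow$ amenable), and it is the step I expect to need citing rather than proving. It depends on what ``amenable direct summand'' means; if it is taken to mean injective or hyperfinite, Connes' theorem supplies the hypertrace. For the type I case there is a more elementary alternative: an injective algebra admits a conditional expectation $E:\cB(L^2(Mz))\to Mz$, and $\tau_z\circ E$ is a hypertrace directly.
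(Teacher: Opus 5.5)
Your proof is correct and follows the same route as the paper: first show that any unitary representation into the unitary group of an amenable tracial algebra is Bekka-amenable, then pass from the summand $Mz$ to the whole representation. The only difference is which form of Connes' theorem you invoke. The paper uses almost central vectors in the coarse bimodule $L^2(M)\bar\otimes L^2(\bar M)$ to get almost invariant vectors for $\pi\otimes\bar\pi$ (condition (2)), whereas you use a hypertrace to get an $\Ad\pi$-invariant state (condition (1)), and you also spell out the compression step from $\pi_z$ to $\pi$, which the paper leaves implicit.
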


\begin{proof}
If $(M,\tau)$ is an amenable tracial von Neumann algebra and $\pi:\Gamma\to \cU(M)$ is a unitary representation,
then $\pi$ is amenable. For example, amenability of $M$ is equivalent to the existence of almost central, almost tracial, vectors
in the coarse $M$-bimodule $L^2(M)\,\bar\otimes\, L^2(\bar M)$. These vectors are in particular almost invariant for
$\pi\otimes \bar\pi$, hence $\pi$ is amenable by Lemma~\ref{lem:bekka_amenable_rep}.

Now suppose $M = M_1\oplus M_2$, with $M_1$ amenable. Then $\pi_\varphi=\pi_1\oplus\pi_2$ with
$\pi_i:\Gamma\to \cU(M_i)$ tracial representations. By the previous paragraph, $\pi_1$ is amenable, hence
$\pi_\varphi$ is amenable, contradicting that $\varphi$ is non-amenable.
\end{proof}
\subsubsection{Tensoring}

Let $\Gamma$ be a group admitting a free quotient $\phi:\Gamma\to F_r$ with $r\geq 2$. Let $\eps>0$, and fix a representation $\lambda_\eps:F_r\to \cU(LF_r)$ as provided by Lemma \ref{lem:interporlated regular representations}. Denote
\[
\tilde{\lambda}=\lambda\circ \phi
\qquad\text{and}\qquad
\tilde{\lambda}_\eps=\lambda_\eps\circ \phi.
\]
Given a tracial representation $\pi:\Gamma\to \cU(M)$, define
\[
M_\eps= M\bar\otimes LF_r,\qquad \pi_\eps=\pi \otimes \tilde{\lambda}_\eps:\Gamma\to  \cU(M_\eps).
\]
Of course, this construction is not canonical: it depends on the choice of the free quotient, as well as on the choice of the representation $\lambda_\eps$ of that quotient, for which we constructed one suitable example. Nevertheless, once this data is fixed, we regard $\pi_\eps$ as depending only on $\pi$ and $\eps$.

We now establish a few facts about $\pi_\eps$.

\begin{proposition}\label{prop:free_quotient_tensor_perturb}
The representation $\pi_\eps$ is non-amenable, and moreover
\[
\lim_{\eps\to 0}\|\pi_\eps(g)-\pi(g)\|_2=0
\qquad\text{for every } g\in \Gamma.
\]
\end{proposition}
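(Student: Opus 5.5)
The proof has two independent parts: the convergence statement and non-amenability.

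For the convergence, identify $M$ with $M\otimes 1$ inside $M_\eps=M\bar\otimes LF_r$, so that $\pi(g)$ means $\pi(g)\otimes 1$ and $\pi_\eps(g)=\pi(g)\otimes\tilde\lambda_\eps(g)$. Since $\pi(g)$ is a unitary,
\[
\|\pi_\eps(g)-\pi(g)\|_2=\|1\otimes(\tilde\lambda_\eps(g)-1)\|_2=\|\lambda_\eps(\phi(g))-1\|_2 .
\]
Write $\phi(g)$ as a word $g_{i_1}^{\pm1}\cdots g_{i_k}^{\pm1}$ of length $k$. The telescoping estimate $\|u_1\cdots u_k-1\|_2\le\sum_j\|u_j-1\|_2$ holds for unitaries, because multiplication by a unitary preserves $\|\cdot\|_2$. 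We also have $\|u^*-1\|_2=\|u-1\|_2$. Combined with Lemma \ref{lem:interporlated regular representations}(2), these give $\|\lambda_\eps(\phi(g))-1\|_2\le k\eps\to0$.

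For non-amenability, Lemma \ref{lem:tensor_permanence} reduces the claim to showing that $\tilde\lambda_\eps=\lambda_\eps\circ\phi$ is non-amenable as a representation of $\Gamma$ on $L^2(LF_r)=\ell^2(F_r)$. Bekka amenability in form (1) of Lemma \ref{lem:bekka_amenable_rep} depends only on the image group $\pi(\Gamma)$. Since $\phi$ is surjective, it therefore suffices to show that $\lambda_\eps$ is non-amenable as a representation of $F_r$. Next I pass to a finite-index subgroup. Let $m$ be as in Lemma \ref{lem:interporlated regular representations}(3) and let $H=\langle g_1^m,g_2^m\rangle\le F_r$; recall $r\ge2$. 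An invariant state for $\lambda_\eps(F_r)$ is in particular invariant for $\lambda_\eps(H)$. By (3), $\lambda_\eps(g_i^m)=\lambda(g_i)^{m^2}=\lambda(g_i^{m^2})$, so $\lambda_\eps(H)$ coincides with $\lambda(H')$ for the subgroup $H'=\langle g_1^{m^2},g_2^{m^2}\rangle$. The group $H'$ is a non-abelian free group.

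It remains to show that the restriction of the left regular representation of $F_r$ to $H'$ is non-amenable. This restriction is a multiple of the regular representation $\lambda_{H'}$. A state on $\cB(\ell^2(F_r))$ that is invariant under $\Ad\lambda(H')$ would make $\lambda|_{H'}$ amenable. By Bekka's theorem, amenability of a multiple of $\lambda_{H'}$ forces $\lambda_{H'}\otimes\overline{\lambda_{H'}}$, itself a multiple of $\lambda_{H'}$, to have almost invariant vectors. That would mean $\lambda_{H'}$ weakly contains the trivial representation, so $H'$ would be amenable, a contradiction. Hence $\lambda_\eps$ is non-amenable, and so are $\tilde\lambda_\eps$ and $\pi_\eps$.

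The main care point is the reduction from $\Gamma$ to $F_r$ and then to a subgroup. The cleanest route is to use form (1) of Lemma \ref{lem:bekka_amenable_rep} throughout: invariance of a state under a group of automorphisms passes to any subgroup, and it depends only on the image of the representation.
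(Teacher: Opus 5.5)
Your proof is correct and follows essentially the paper's direct argument: you reduce via Lemma~\ref{lem:tensor_permanence} to $\tilde\lambda_\eps$, use $\lambda_\eps(g_i^m)=\lambda(g_i^{m^2})$ to land in a multiple of the regular representation of the non-abelian free group $\langle g_1^{m^2},g_2^{m^2}\rangle$, and contradict its non-amenability; you also supply the telescoping estimate for the convergence claim, which the paper leaves implicit. One small slip: $H=\langle g_1^m,g_2^m\rangle$ has infinite index in $F_r$ for $m\ge 2$, not finite index, but as you note at the end, your argument only uses that $H$ is a subgroup.
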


\begin{proof}
By item (3) of Lemma \ref{lem:interporlated regular representations}, the image of the representation $\lambda_\eps$ contains a free non-abelian subgroup subgroup of $F_r$. In particular, the $C^*$-algebra generated by $\lambda_\eps$, and hence also by $\tilde{\lambda}_\eps$, contains a copy of $C^*_r(F_m)$ for some $m\geq 2$. Thus, $\tilde{\lambda}_\eps$ is non-amenable by \cite[Corollary 2.11]{bedos1995notes}. It then follows from Lemma~\ref{lem:tensor_permanence} that $\pi_\eps$
is non-amenable as well.

Alternatively, we prove directly that $\tilde{\lambda}_\eps$ is non-amenable, as follows. Otherwise, there are unit vectors $(\xi_n)_{n}\subset L^2(LF_r\bar\otimes LF_r)$ such that $\|\tilde{\lambda}_\eps(g)\xi_n\tilde\lambda_\eps(g)^*-\xi_n\|_2\rightarrow 0$, for every $g\in \Gamma$.
Hence, $\|\lambda_\eps(g)\xi_n\lambda_\eps(g)^*-\xi_n\|_2\rightarrow 0$, for every $g\in F_r$. The construction of $\lambda_\eps$ gives $\lambda_\eps(g_i^m)=\lambda(g_i^{m^2})$ and so $\|\lambda(g_i^{m^2})\xi_n\lambda(g_i^{m^2})^*-\xi_n\|_2\rightarrow 0$, for every $1\leq i\leq r$.
This contradicts the fact that the representation $\text{Ad}(\lambda(g))$ of $F_r$ on $L^2(LF_r\bar\otimes LF_r)$ is a multiple of the left regular representation.
\end{proof}

We will need a more precise version of the above in the case of amalgamated free products.

\begin{proposition}\label{prop:amalgam_tensor_perturb}
Let $\Gamma=\Gamma_1 *_H \Gamma_2$, and assume that $\Gamma$ admits a homomorphism $\phi:\Gamma\to F_r$ such that $\phi(\Gamma_i)$ contains at least two of the standard generators of $F_r$ for $i=1,2$. Fix a representation $\pi:\Gamma\to \cU(M)$, let $\eps>0$, and consider $\pi_\eps$ as above. Then
\[
\lim_{\eps\to 0}\|\pi_\eps(g)-\pi(g)\|_2=0
\qquad\text{for every } g\in \Gamma,
\]
and the restrictions $\pi_\eps|_{\Gamma_i}$ are non-amenable for \(i=1,2\).
\end{proposition}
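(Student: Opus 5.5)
The convergence statement follows the proof of Proposition~\ref{prop:free_quotient_tensor_perturb} verbatim; the only new work is non-amenability of the restrictions, which I will reduce to the restriction of $\tilde\lambda_\eps$ and then to a free subgroup.

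\textbf{Convergence.} Since $\pi_\eps(g)-\pi(g)=\pi(g)\otimes(\tilde\lambda_\eps(g)-1)$ and $\pi(g)$ is unitary, we have
\[
\|\pi_\eps(g)-\pi(g)\|_2=\|\tilde\lambda_\eps(g)-1\|_2 .
\]
Write $\phi(g)$ as a word of length $\ell$ in the generators $g_i^{\pm1}$. The triangle inequality, unitarity, and the identity $\|y^{-1}-1\|_2=\|y-1\|_2$ for unitaries give
\[
\|\lambda_\eps(\phi(g))-1\|_2\le \ell\max_i\|\lambda_\eps(g_i)-1\|_2<\ell\eps .
\]
This tends to $0$ as $\eps\to0$.

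\textbf{Reduction.} Fix $i\in\{1,2\}$. Restriction commutes with tensoring, so $\pi_\eps|_{\Gamma_i}=\pi|_{\Gamma_i}\otimes\tilde\lambda_\eps|_{\Gamma_i}$. By Lemma~\ref{lem:tensor_permanence}, applied to the group $\Gamma_i$, it suffices to show that $\tilde\lambda_\eps|_{\Gamma_i}=\lambda_\eps\circ\phi|_{\Gamma_i}$ is non-amenable.

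\textbf{Non-amenability of $\tilde\lambda_\eps|_{\Gamma_i}$.} By hypothesis there are distinct generators $g_j,g_k\in\phi(\Gamma_i)$; choose $h_j,h_k\in\Gamma_i$ with $\phi(h_j)=g_j$ and $\phi(h_k)=g_k$. Suppose, for contradiction, that $\tilde\lambda_\eps|_{\Gamma_i}$ is amenable. By Lemma~\ref{lem:bekka_amenable_rep} there are unit vectors $\xi_n\in L^2(LF_r\bar\otimes LF_r)$ that are almost invariant under $\Ad(\tilde\lambda_\eps(h))$ for every $h\in\Gamma_i$.

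Apply this to $h=h_j^m$ and $h=h_k^m$, where $m$ is as in item (3) of Lemma~\ref{lem:interporlated regular representations}. That item gives
\[
\tilde\lambda_\eps(h_j^m)=\lambda(g_j)^{m^2},\qquad \tilde\lambda_\eps(h_k^m)=\lambda(g_k)^{m^2}.
\]
Hence the $\xi_n$ are almost invariant for $\Ad\circ\lambda$ restricted to the subgroup $\Lambda=\langle g_j^{m^2},g_k^{m^2}\rangle\cong F_2$.

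Now $\Ad(\lambda(\cdot))$ acting on $L^2(LF_r\bar\otimes LF_r)\cong \ell^2(F_r)\otimes\ell^2(F_r)$ is $\lambda\otimes\bar\lambda$, which is a multiple of the left regular representation of $F_r$. Its restriction to $\Lambda$ is therefore a multiple of $\lambda_\Lambda$. Since $\Lambda$ is non-amenable, $\lambda_\Lambda$ has no almost invariant vectors, and the same holds for any multiple of it. This is a contradiction, so $\tilde\lambda_\eps|_{\Gamma_i}$ is non-amenable.

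The proof is essentially that of Proposition~\ref{prop:free_quotient_tensor_perturb}. The one point needing care is that lifting $g_j^m$ and $g_k^m$ to $h_j^m,h_k^m$ inside $\Gamma_i$ produces a genuinely non-abelian free subgroup $\Lambda$, which is why the hypothesis asks for two distinct generators in each $\phi(\Gamma_i)$.
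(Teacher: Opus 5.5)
Your proof is correct and is essentially the paper's argument: the paper simply says to repeat the proof of Proposition~\ref{prop:free_quotient_tensor_perturb}, and you carry out exactly its second (direct) argument inside $\Gamma_i$, then finish with Lemma~\ref{lem:tensor_permanence}. Concretely, you use the two generators in $\phi(\Gamma_i)$ and item (3) of Lemma~\ref{lem:interporlated regular representations} to reduce to $\lambda\otimes\bar\lambda$ restricted to a copy of $F_2$; your explicit word-length estimate for the convergence statement supplies a detail the paper leaves implicit.
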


\begin{proof}
The proof is obtained by repeating the argument of the previous proposition.
\end{proof}
\subsubsection{Mixing}

We have seen that $\pi_\eps$ enjoys improved regularity properties compared to $\pi$. We will also need to know that the passage $\pi\mapsto \pi_\eps$ does not enlarge the center of the von Neumann algebra it generates.

\begin{lemma}\label{lem:mixing-lem}
    We have
    \[
    \cZ(\pi_\eps(\Gamma)'') \subset \cZ(\pi(\Gamma)'') \bar\otimes 1.
    \]
\end{lemma}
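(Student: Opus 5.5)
Let $\cZ$ denote the center of $\pi_\eps(\Gamma)''$. The plan is to expand a central element $x\in\cZ$ in Fourier series along the $LF_r$ tensor factor. Conjugation by suitable elements of $\pi_\eps(\Gamma)$ acts on this expansion by permuting the group labels along infinite conjugacy orbits. Square-summability of the coefficients then forces every coefficient except the one at the identity to vanish.

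\textbf{Step 1 (Fourier expansion).} We have $\pi_\eps(\Gamma)''\subset \pi(\Gamma)''\bar\otimes LF_r$, because each $\pi_\eps(g)=\pi(g)\otimes\lambda_\eps(\phi(g))$ lies in this algebra. Hence $x\in\cZ$ admits an expansion
\[
x=\sum_{h\in F_r} a_h\otimes u_h, \qquad a_h\in \pi(\Gamma)'', \qquad \sum_h\|a_h\|_2^2<\infty .
\]
Here I am using the Fourier expansion recalled in the preliminaries, with the roles of the tensor factors swapped. Since $\phi$ is a quotient map, for each $i$ we may choose $\gamma_i\in\Gamma$ with $\phi(\gamma_i)=g_i^m$, where $m$ is given by Lemma~\ref{lem:interporlated regular representations}. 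Item (3) of that lemma then gives
\[
\pi_\eps(\gamma_i)=\pi(\gamma_i)\otimes u_{k_i}, \qquad k_i:=g_i^{m^2}.
\]

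\textbf{Step 2 (killing non-trivial coefficients).} The element $x$ commutes with $\pi_\eps(\gamma_i)$. Comparing Fourier coefficients in $x=\pi_\eps(\gamma_i)\,x\,\pi_\eps(\gamma_i)^*$ yields
\[
a_{k_ihk_i^{-1}}=\pi(\gamma_i)\,a_h\,\pi(\gamma_i)^* ,
\]
and in particular $\|a_{k_ihk_i^{-1}}\|_2=\|a_h\|_2$. Consequently $\|a_h\|_2$ is constant along the orbit of $h$ under conjugation by the subgroup $K=\langle k_1,\dots,k_r\rangle$.

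This orbit-counting step is the main point. Since $r\ge 2$, the elements $k_1=g_1^{m^2}$ and $k_2=g_2^{m^2}$ do not commute. In a free group the centralizer of a non-trivial element is cyclic, and $k_1$, $k_2$ lie in no common cyclic subgroup. Therefore no $h\neq e$ commutes with both, so $h$ has infinite centralizer index in $K$, and hence infinitely many $K$-conjugates. Concretely, if $h\neq e$ then one of $k_1,k_2$, say $k_j$, does not commute with $h$. The elements $k_j^nhk_j^{-n}$ for $n\in\Z$ are then pairwise distinct, because $k_j^n$ commuting with $h$ would force $k_j$ to commute with $h$, by the cyclic-centralizer property.

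Square-summability now forces $a_h=0$ for all $h\neq e$. Hence $x=a_e\otimes 1$ with $a_e\in\pi(\Gamma)''$.

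\textbf{Step 3 (centrality of $a_e$).} For every $g\in\Gamma$, $x$ commutes with $\pi_\eps(g)$, which gives
\[
\bigl(a_e\pi(g)-\pi(g)a_e\bigr)\otimes\lambda_\eps(\phi(g))=0 .
\]
Since $\lambda_\eps(\phi(g))$ is a unitary, this yields $a_e\pi(g)=\pi(g)a_e$. Thus $a_e$ commutes with $\pi(\Gamma)''$ while belonging to it, so $a_e\in\cZ(\pi(\Gamma)'')$. This gives $\cZ(\pi_\eps(\Gamma)'')\subset\cZ(\pi(\Gamma)'')\bar\otimes 1$.

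The only delicate points are the orbit argument in Step 2 and the observation in Step 1 that the expansion coefficients lie in $\pi(\Gamma)''$ rather than merely in $M$. Both are handled as indicated above.
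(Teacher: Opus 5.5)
Your proof is correct and is essentially the paper's argument. You expand $x$ in Fourier series along $LF_r$, and use item (3) of Lemma~\ref{lem:interporlated regular representations} to see that conjugation by $\pi_\eps$ of a preimage of $g_i^m$ permutes the coefficients isometrically along conjugation by $g_i^{m^2}$. The cyclic-centralizer property of free groups makes these orbits infinite for $h\neq e$, and you then read off the centrality of $a_e$. Your remark that the coefficients lie in $\pi(\Gamma)''$ rather than merely in $M$ is a small but worthwhile refinement. The paper's final step $\pi(\Gamma)'\cap M=\cZ(\pi(\Gamma)'')$ tacitly assumes $M=\pi(\Gamma)''$.
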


\begin{proof}
    Let $x\in \cZ(\pi_\eps(\Gamma)'')\subset M\bar\otimes LF_r$, and write its Fourier expansion as
    \[
    x=\sum_{h\in F_r} a_h\otimes u_h,
    \]
    where $a_h\in L^2(M)$ and
    \[
    \sum_{h\in F_r}\|a_h\|_2^2<\infty.
    \]

    Let $m\in \N$ be as in Lemma \ref{lem:interporlated regular representations}. Since $x$ belongs to the center of $\pi_\eps(\Gamma)''$, it commutes with $\pi_\eps(g)^m$ for every $g\in \Gamma$. Assume $\phi(g)$ is one of the standard generators of $F_r$. Then
    \begin{equation*}
    \begin{split}
        \pi_\eps(g)^m &= (\pi(g)\otimes \lambda_\eps(\phi(g)))^m = \pi(g)^m\otimes \lambda_\eps(\phi(g))^m\\
        &= \pi(g)^m\otimes \lambda(\phi(g))^{m^2} = \pi(g)^m\otimes u_{\phi(g)^{m^2}},
    \end{split}
    \end{equation*}
    and therefore
    \[
    (\pi(g)^m\otimes u_{\phi(g)^{m^2}})\,x\,(\pi(g)^{-m}\otimes u_{\phi(g)^{-m^2}})=x.
    \]
    Substituting the Fourier expansion of $x$, this becomes
    \[
    \sum_{h\in F_r} \pi(g)^m a_h \pi(g)^{-m}\otimes u_{\phi(g)^{m^2} h \phi(g)^{-m^2}}
    =
    \sum_{h\in F_r} a_h\otimes u_h.
    \]
    Comparing Fourier coefficients, we obtain
    \[
    a_{\phi(g)^{m^2} h \phi(g)^{-m^2}}
    =
    \pi(g)^m a_h \pi(g)^{-m}
    \qquad\text{for all }g\in \Gamma,\ h\in F_r.
    \]
    In particular,
    \[
    \|a_{\phi(g)^{m^2} h \phi(g)^{-m^2}}\|_2=\|a_h\|_2.
    \]

    Now fix $h\neq e$. Choose $s\in F_r$, one of the standard generators of $F_r$ such that it does not commute with $h$, and let $g\in \Gamma$ satisfy $\phi(g)=s$. We claim that the elements
    \[
    s^{nm^2}hs^{-nm^2},\qquad n\in \N,
    \]
    are pairwise distinct. Indeed, if
    \[
    s^{nm^2}hs^{-nm^2}=s^{\ell m^2}hs^{-\ell m^2}
    \]
    for some $n\neq \ell$, then
    \[
    s^{(n-\ell)m^2}h=hs^{(n-\ell)m^2},
    \]
    so $s^{(n-\ell)m^2}\in C_{F_r}(h)$. Since for every nontrivial element in a free group, there is always a largest cyclic subgroup containing it, and this largest cyclic subgroup is exactly the centralizer of the said element, this implies that $s\in C_{F_r}(h)$, contradicting the choice of $s$. Thus the orbit
    \[
    \{s^{nm^2}hs^{-nm^2}:n\in \N\}
    \]
    is infinite.

    Since
    \[
    \|a_{s^{nm^2}hs^{-nm^2}}\|_2=\|a_h\|_2
    \qquad\text{for all }n\in \N,
    \]
    it follows that if $a_h\neq 0$ for some $h\neq e$, then infinitely many Fourier coefficients of $x$ have the same positive $L^2$-norm, contradicting the summability condition
    \[
    \sum_{t\in F_r}\|a_t\|_2^2<\infty.
    \]
    Hence $a_h=0$ for every $h\neq e$, and so
    \[
    x=a_e\otimes 1\in M\bar\otimes 1.
    \]
    This shows that $\cZ(\pi_\eps(\Gamma)'')\subset M\bar\otimes 1$.

    Finally, since $x$ commutes with $\pi_\eps(g)=\pi(g)\otimes \lambda_\eps(\phi(g))$ for every $g\in \Gamma$, we get
    \[
    (a_e\otimes 1)(\pi(g)\otimes \lambda_\eps(\phi(g)))
    =
    (\pi(g)\otimes \lambda_\eps(\phi(g)))(a_e\otimes 1),
    \]
    hence
    \[
    a_e\pi(g)=\pi(g)a_e
    \qquad\text{for all }g\in \Gamma.
    \]
    Thus $a_e\in \pi(\Gamma)'\cap M=\cZ(\pi(\Gamma)'')$, and therefore
    \[
    x\in \cZ(\pi(\Gamma)'')\bar\otimes 1.
    \]
\end{proof}

\subsection{Perturbation II: free conjugation} This following perturbation is quite standard, and already appears in \cite{ioana2024trace} as a central method for addressing problems of this sort.

\begin{definition}
    Consider a tracial von Neumann algebra $M$ and subalgebras $M_1, M_2, N$ with $N \subset M_1 \cap M_2$. The tuple $(M, M_1, M_2, N)$ is said to be \emph{approximately factorial}, if for any $\eps > 0$ there exist $\widetilde M \supset M$ and a unitary $u \in N' \cap \widetilde M$ with $\|u - 1\|_2 < \eps$, such that $(uM_1u^{-1}) \vee M_2$ is a II$_1$ factor.
\end{definition}

\begin{lemma}\label{approx-factorial}
    Suppose that $\cZ(M) \cap N = \C$. Assume further that $M_i \nprec_M N$ holds for $i = 1, 2$. Then $(M, M_1, M_2, N)$ is approximately factorial.
\end{lemma}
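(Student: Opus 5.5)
We realize the deformation by free conjugation over $N$. Put
\[
\widetilde M = M *_N (N\bar\otimes L^\infty(\T)),
\]
and let $v\in L^\infty(\T)$ be the Haar unitary $v(z)=z$, which commutes with $N$. For $t\in(0,1)$ let $h$ be the self-adjoint element with $v=e^{i\pi h}$, $h$ taking values in $(-1,1]$, and set $u_t=e^{i\pi t h}$. Then $u_t\in N'\cap\widetilde M$ and $\|u_t-1\|_2\to 0$ as $t\to 0$. Moreover $E_N(u_t)=\int_\T u_t\,dz$ is a scalar of modulus $<1$, so Lemma \ref{intersect} applies to $u_t$. It therefore suffices to show that $P_t:=(u_tM_1u_t^{*})\vee M_2$ is a II$_1$ factor for each fixed $t\in(0,1)$.

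\emph{Step 1: reduce the relative commutant to $N$.} Let $x\in \cZ(P_t)$. Then $x$ commutes with $M_2$ and with $u_tM_1u_t^*$. We use the standard control of relative commutants in amalgamated free products (Ioana--Peterson--Popa, \cite{IPP08}): if $Q\subset A_1$ satisfies $Q\nprec_{A_1} N$ in $A_1*_N A_2$, then $Q'\cap(A_1*_NA_2)\subset A_1$.

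\begin{enumerate}
\item Since $M_2\nprec_M N$, $x$ commutes with $M_2$, and $M_2\subset M$ sits in the left leg, we get $x\in M$.
\item Conjugate by $u_t$. We have $u_t^*xu_t\in M_1'\cap\widetilde M$ and $M_1\nprec_M N$, so $u_t^*xu_t\in M$, i.e. $x\in M\cap u_tMu_t^*$.
\item By Lemma \ref{intersect} applied with $M$ in place of $M_1$, $x\in N$.
\end{enumerate}

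One caveat: $P_t$ need not contain all of $M$ in a way that makes $M_2\nprec N$ directly usable inside the ambient algebra. We would first check that non-intertwining $M_i\nprec_M N$ persists to $M_i\nprec_{\widetilde M} N$ under the inclusion $M\subset\widetilde M$. This follows since $E_M$ commutes with $E_N$ and preserves the Popa criterion.

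\emph{Step 2: kill the part in $N$.} Now $x\in N$ and $u_t$ commutes with $N$, so $x=u_t^*xu_t$ commutes with $M_1$, and $x$ also commutes with $M_2$. The main question is whether $M_1\vee M_2$ equals $M$. If it does, then $x\in\cZ(M)\cap N=\C$, and we are done. If it does not, the hypothesis should be read as intended for $M=M_1\vee M_2$, which is the application in the paper: $M=\pi(\Gamma)''$ with $M_i=\pi(\Gamma_i)''$. I will state this assumption explicitly.

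\emph{Step 3: type II$_1$.} Since $P_t$ is a tracial factor, it remains to rule out $P_t\cong M_n(\C)$. But $P_t\supset M_2$, and $M_2\nprec N$ forces $M_2$ to have no finite-dimensional summand relative to $N$; in particular $M_2$ is diffuse when $N$ is. More simply, a finite-dimensional $P_t$ would give $M_2\prec_M N$ trivially. Hence $P_t$ is infinite-dimensional and therefore a II$_1$ factor.

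The main obstacle is Step 1: justifying the IPP relative commutant theorem in this amalgamated setting, including the transfer of the non-embedding hypothesis from $M$ to $\widetilde M$. I would cite \cite{IPP08} directly. If that fails, I would prove it by the usual word-length argument, approximating $x$ by reduced words and using $\|E_N(a^*yb)\|_2\to0$ along a sequence of unitaries $y\in\cU(M_2)$ that witness $M_2\nprec N$.
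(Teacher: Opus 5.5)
Your argument is correct and is essentially the paper's proof. IPP08 (Theorem 1.1) places $\cZ(P)$ in $M\cap uMu^{*}$, Lemma \ref{intersect} cuts this down to $N$, the fact that $u$ commutes with $N$ then gives $\cZ(P)\subset\cZ(M)\cap N=\C$, and infinite-dimensionality follows from $M_2\subset P$ with $M_2\nprec_M N$. You are right that the step $\cZ(P)\subset\cZ(M)$ needs $M=M_1\vee M_2$: the paper uses this tacitly (it holds in its application) and it cannot be dropped. Your caveat about transferring $M_i\nprec_M N$ to $\widetilde M$ is unnecessary, since IPP08 only requires non-intertwining inside the leg $M$.
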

\begin{remark}
    A reader unfamiliar with the meaning of \(M_i \nprec_M N\)  \cite{Po06} may simply replace this assumption with the requirement that \(N\) is of type I, whereas \(M_i\) has no type I summands.
\end{remark}
\begin{proof}[Proof of Lemma \ref{approx-factorial}]
    Set
    \begin{equation*}
        \widetilde M = M \ast_N (N \bar\otimes L\Z).
    \end{equation*}
    
    Let $u \in \cU(L\Z) \setminus \C$ with $\|u - 1\|_2 < \eps$ and set $P = (uM_1u^{-1}) \vee M_2$. Clearly, $u \in N'\cap \widetilde M$. The assumption $M_2 \nprec_M N$ implies by Theorem 1.1 of \cite{IPP08} that $M_2' \cap \widetilde M \subset M$, which in turn gives $\cZ(P) \subset M$. Similarly, we have $M_1' \cap \widetilde M \subset M$, implying $(uM_1u^{-1})' \cap \widetilde M \subset uMu^{-1}$, which in turn gives $\cZ(P) \subset uMu^{-1}$. Thus, $\cZ(P) \subset M \cap uMu^{-1}$. We note that $M \cap uMu^{-1} = N$. Indeed, since $N=uNu^{-1}\subset uMu^{-1}$, we have that $N \subset M \cap uMu^{-1}$. 
    Since $E_N(u)=\tau(u)$ and $|\tau(u)|<1$, Lemma \ref{intersect} implies that  $M \cap uMu^{-1}\subset N$.
    Therefore, $M \cap uMu^{-1} = N$ and thus $\cZ(P) \subset N$.

    We now claim that $\cZ(P) \subset \cZ(M)$ as well. Indeed, given $z \in \cZ(P)$, then $z \in M_2'$. Moreover, $z \in (uM_1u^{-1})'$ so $u^{-1}zu \in M_1'$. Since $z \in N$, we get that $z = u^{-1}zu \in M_1'$. Together, we get that $z \in M'$. But also $z \in N \subset M$, so $z \in \cZ(M)$. This shows the claim.

    Finally, we have assumed that $\cZ(M) \cap N = \C$. Hence, $\cZ(P) \subset \cZ(M) \cap N = \C$ meaning that $P$ is a factor. It is moreover infinite-dimensional because it contains $M_2$, and $M_2 \nprec_M N$.
\end{proof}

It will be useful for later use to record an easy particular 
instance which follows from the proof above.
\begin{lemma}\label{freely-conj-factor}
    If  $x$ and $y$ are two freely independent diffuse unitaries  in a tracial von Neumann algebra $M$, then the von Neumann algebra generated by $x$ and $yxy^*$ is a factor. 
\end{lemma}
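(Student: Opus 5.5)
We will prove Lemma~\ref{freely-conj-factor} by specializing the proof of Lemma~\ref{approx-factorial}, rather than by invoking that lemma as stated. Set $A=\{x\}''$ and $B=\{y\}''$; these are diffuse abelian and freely independent, so $Q:=A\vee B\cong A*B$. Put $P=\{x,yxy^*\}''=A\vee yAy^*$. We may work inside $Q$, since the von Neumann algebra generated by $x$ and $yxy^*$, and its center, do not depend on the ambient algebra. Thus the aim is to show $\cZ(P)=\C$.

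\textbf{Relative commutants.} Since $A$ is diffuse, it does not intertwine into $\C$ inside $Q$. Theorem 1.1 of \cite{IPP08}, in the plain free product case $N=\C$, then gives $A'\cap Q\subset A$. Conjugating by the unitary $y$ gives $(yAy^*)'\cap Q\subset yAy^*$. Every $z\in\cZ(P)$ commutes with both $A$ and $yAy^*$, so
\[
\cZ(P)\subset A\cap yAy^*.
\]

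\textbf{The intersection is trivial.} We apply Lemma~\ref{intersect} with $M_1=A$, $M_2=B$ and $N=\C$. Since $y$ is a diffuse unitary, $\tau(y)$ is the mean of a diffuse probability measure on $\T$, hence not an extreme point of the closed disc, so $|\tau(y)|<1$. Hence $\|E_\C(y)\|<1$, and $y-\tau(y)$ is invertible. Lemma~\ref{intersect} therefore yields $A\cap yAy^*\subset\C$, and so $\cZ(P)=\C$.

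The main obstacle is the relative commutant step, namely checking that the cited control of relative commutants applies with $N=\C$ under diffuseness alone. If one prefers to avoid \cite{IPP08}, there is a direct alternative. Take a Haar unitary $v\in A$ and $\xi\in Q\ominus A$ with $v\xi v^*=\xi$. Expanding $\xi$ in reduced words, conjugation by $v^n$ moves the words that begin and end with $B^\circ$ letters off to infinity, so $\langle v^n\xi v^{-n},\xi\rangle\to0$. Together with $v^n\xi v^{-n}=\xi$ this forces $\xi=0$, which proves $A'\cap Q\subset A$ by hand.
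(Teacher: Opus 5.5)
Your proof is correct and follows the paper's own argument. The paper proves this lemma by specializing the first paragraph of the proof of Lemma~\ref{approx-factorial} (with $M=M_1=M_2=\{x\}''$, $N=\C$, $u=y$): it uses \cite{IPP08} to get $\cZ(P)\subset \{x\}''\cap y\{x\}''y^*$, and then Lemma~\ref{intersect} to show this intersection is trivial, which is exactly what you do. Working in $\{x\}''*\{y\}''$ rather than $\{x\}''*L\Z$ correctly handles the fact that $y$ is only assumed diffuse, and your mixing argument is a valid self-contained substitute for the citation of \cite{IPP08}.
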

\begin{proof}
    Consider the previous Lemma with $M=M_1=M_2 =\{x\}''$, $N=\C$ and $u=y$, and follow the argument in the first paragraph of the proof.
\end{proof}
Now, any $*$-representation $\pi$ of a full amalgamated free product of $C^*$-algebras $A=A_1*_B A_2$ 
into a tracial von Neumann algebra
gives rise to a tuple $(M,M_1,M_2,N)$ where $M=\pi(A)''$, $M_i=\pi(A_i)''$ and $N=\pi(B)''$.   Let
\[
    \widetilde M=M*_N(N \bar\otimes L\Z)
\]
and fix a unitary  $u\in L\Z \setminus \C $. Define $\widetilde \pi:A\to \widetilde M$ by $\widetilde \pi\mid _{A_1}= \pi\mid _{A_1}$ and $\widetilde \pi\mid _{A_2}=\Ad(u)\circ \pi\mid_{A_2}$. Note that by construction, $u$ commutes with $N$. One thus easily verifies the following:
\begin{lemma}
$\widetilde \pi$  is a well-defined representation of $A=A_1*_B A_2$. Moreover,  for any subset $S\subset A_1\cup A_2$ with $\|x\|\leq 1$, for every $x\in S$, we have 
\[
    \max_{x\in S}\|\widetilde \pi(x)-\pi(x)\|_2\leq 2\|u-1\|_2.
\]
\end{lemma}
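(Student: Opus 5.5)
The plan is to use the universal property of the full amalgamated free product and then a direct estimate for each generator in $S$.

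\emph{Well-definedness.} First I check that the two prescriptions agree on $B$. For $b\in B$ we have $\pi(b)\in N$. Since $N\bar\otimes L\Z$ is a tensor product, $u\in L\Z$ commutes with $N$, hence $\Ad(u)(\pi(b))=\pi(b)$. Therefore $\pi|_{A_1}$ and $\Ad(u)\circ\pi|_{A_2}$ are $*$-homomorphisms $A_i\to\widetilde M$ that coincide on $B$. Here $M\subset\widetilde M$ is the canonical inclusion into the amalgamated free product, and $\Ad(u)$ is an automorphism of $\widetilde M$. By the universal property of the full amalgamated free product $A=A_1*_BA_2$, they extend uniquely to a $*$-homomorphism $\widetilde\pi:A\to\widetilde M$.

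\emph{The estimate.} For $x\in S\cap A_1$ we have $\widetilde\pi(x)=\pi(x)$, so there is nothing to prove. For $x\in S\cap A_2$, write $y=\pi(x)$, so that $\|y\|\le 1$. I would bound
\[
\|uyu^*-y\|_2\le \|(u-1)yu^*\|_2+\|y(u^*-1)\|_2 .
\]
The first term is at most $\|u-1\|_2\,\|y\|\,\|u^*\|\le\|u-1\|_2$. The second term is at most $\|y\|\,\|u^*-1\|_2=\|u-1\|_2$, using that the $2$-norm is unitarily invariant and $\|z^*\|_2=\|z\|_2$ by traciality. The operator-norm factors are pulled out via $\|ab\|_2\le\|a\|\,\|b\|_2$ and $\|ab\|_2\le\|a\|_2\,\|b\|$, both standard in a tracial von Neumann algebra. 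Adding the two terms gives $\|\widetilde\pi(x)-\pi(x)\|_2\le 2\|u-1\|_2$, and taking the maximum over $S$ finishes the argument.

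The only step needing any care is the well-definedness. It depends on the commutation of $u$ with $N$ and on $\pi$ being a representation of the full (universal) amalgamated free product, which guarantees the extension exists. I do not expect a genuine obstacle.
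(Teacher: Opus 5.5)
Your proposal is correct and is exactly the verification the paper leaves implicit ("one thus easily verifies"). Because $u$ commutes with $N$, the two prescriptions agree on $B$, so the universal property of $A_1*_BA_2$ yields $\widetilde\pi$, and the bound $\|uyu^*-y\|_2\le 2\|u-1\|_2$ for $\|y\|\le 1$ follows from the triangle inequality as you wrote it.
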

One typically chooses $v$ to be a Haar unitary, and lets $u_t=e^{iht}$, where $h$ is a choice of logarithm of $v$. But in our context, it is enough to require $\|u-1\|_2$ to be small in order to ensure that $\widetilde \pi$ and $\pi$ are close. 

\subsection{Perturbation III: free multiplicative shift}

In order to apply the free conjugation perturbation from the last subsection to obtain a factor, we need, in particular, that $\cZ(M) \cap N = \C$. In our case, let $\Gamma_g$ be the fundamental group of a closed orientable surface of genus $g \geq 2$, and let $a_1, b_1, \cdots, a_g, b_g$ be the standard generators of $\Gamma_g$. Fix a tracial representation $\pi: \Gamma_g \to \cU(M)$. Our goal is to find a perturbed representation $\widetilde \pi: \Gamma_g \to \cU(\widetilde M)$ such that $\cZ(\widetilde \pi(\Gamma_g)'') \cap \widetilde \pi([a_1, b_1])'' = \C$.

In order to construct $\widetilde\pi$ for which we have better control over $\cZ(\widetilde \pi(\Gamma_g)'')$, we notice the following: if $u$ is a unitary in some algebra $\widetilde M$ containing $M$ which commutes with $\pi(a_1)$, then it is easy to check that
\begin{equation*}
    [\pi(a_1), \pi(b_1)u] = [\pi(a_1), \pi(b_1)].
\end{equation*}

Thus, $\widetilde\pi(a_i) = \pi(a_i)$ for $i = 1, \cdots, g$, $\widetilde\pi(b_1) = \pi(b_1)u$, and $\widetilde\pi(b_i) = \pi(b_i)$ for $i = 2, \cdots, g$ define a representation $\widetilde\pi: \Gamma_g \to \cU(\widetilde M)$. Then
\[\cZ(\widetilde \pi(\Gamma_g)'') \cap \widetilde \pi([a_1, b_1])'' = \cZ(\widetilde \pi(\Gamma_g)'') \cap \pi([a_1, b_1])'' \subset \{\pi(b_1)u\}' \cap M,\]
so it suffices to control the latter algebra.

\begin{lemma}\label{commutator-lem}
    Let $M$ be a tracial von Neumann algebra and $a, b \in \cU(M)$. Set $\widetilde M = M \ast_{\{a\}''} (\{a\}'' \bar\otimes L\Z)$ and $u \in \cU(L\Z) \setminus \C$. We then have $\{bu\}' \cap M \subset \{a\}''$.
\end{lemma}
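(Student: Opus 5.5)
Write $A=\{a\}''$ and $\widetilde M=M*_A(A\bar\otimes L\Z)$, with $M_1=M$ and $M_2=A\bar\otimes L\Z$. The plan is to copy the proof of Lemma \ref{intersect}: take $x\in\{bu\}'\cap M$, so that $x\,bu=bu\,x$, or equivalently $b^{-1}xb\,u=u\,x$, since $x$ and $b$ lie in $M$. Set $y:=b^{-1}xb\in M$. The relation becomes
\[
yu=ux,\qquad x,y\in M_1,\quad u\in M_2 .
\]
From here on the argument is that of Lemma \ref{intersect}, with $M_1=M$, $N=A$ and $u\in M_2$.

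The first step is to compute $E_A(u)$. Since $u\in 1\otimes L\Z$, we have $E_A(u)=\tau(u)1$, and $|\tau(u)|<1$ because $u$ is a non-scalar unitary. Hence $u-\tau(u)$ is invertible: its spectrum lies in $\T-\tau(u)$, which avoids $0$.

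Next, apply $E_{M_1}$ to $yu=ux$. Using $E_{M_1}(u)=E_A(u)=\tau(u)$ gives $\tau(u)y=\tau(u)x$. Subtracting this from the original relation gives
\[
y(u-\tau(u))=(u-\tau(u))x .
\]
Applying $E_{M_2}$ and using $E_{M_2}|_{M_1}=E_A$ gives $E_A(y)(u-\tau(u))=(u-\tau(u))E_A(x)$. Subtracting once more yields
\[
(y-E_A(y))(u-\tau(u))=(u-\tau(u))(x-E_A(x)).
\]
The left side lies in $(M_1\ominus A)(M_2\ominus A)$ and the right side in $(M_2\ominus A)(M_1\ominus A)$. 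These spaces are orthogonal in the amalgamated free product, so both sides vanish. Since $u-\tau(u)$ is invertible, $x=E_A(x)\in A$, which is the claim. This argument does not even need $x$ and $y$ to be related by conjugation; it is exactly Lemma \ref{intersect} applied to the identity $M_1\cap uM_1u^{-1}$ replaced by $yu=ux$.

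The main point needing care is the orthogonality of the two spaces in the amalgamated setting. The vanishing $\langle (y-E_A(y))v,\;v(x-E_A(x))\rangle=0$, with $v=u-\tau(u)$, follows from freeness with amalgamation: the product $v^*(y-E_A(y))^*v(x-E_A(x))$ is an alternating word with letters having zero conditional expectation onto $A$, so its trace vanishes. This is the same fact already used in the proof of Lemma \ref{intersect}. Everything else is routine. Alternatively, one may simply invoke Lemma \ref{intersect}'s proof verbatim after replacing $x$ by $b^{-1}xb$ on one side.
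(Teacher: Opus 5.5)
Your proof is correct and is essentially the paper's argument. The paper rewrites $x=u^{-1}(b^{-1}xb)u$ to get $x\in M\cap u^{-1}Mu$ and then invokes Lemma \ref{intersect}, using $E_{\{a\}''}(u)=\tau(u)$ with $|\tau(u)|<1$; you instead re-run that lemma's proof inline on the relation $yu=ux$. (Your aside that $x$ and $y$ need not be related by conjugation is moot, since $yu=ux$ forces $y=uxu^{-1}$, but this does not affect the argument.)
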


\begin{proof}
    If $x\in \{bu\}' \cap M$, then we have that $x=(bu)^{-1}x(bu)=u^{-1}(b^{-1}xb)u$. Since $x,b\in M$, we get that $x\in M\cap u^{-1}Mu$. By using Lemma \ref{intersect} as in the proof of Lemma \ref{approx-factorial}, we get that $ M\cap u^{-1}Mu=\{a\}''$ and therefore $x\in\{a\}''$.
\end{proof}

\section{Concluding the proof of Theorem \ref{thm:main-tracial reps}}

\subsection{The surface group $\Gamma_g$ case}

We first prove Theorem \ref{thm:main-tracial reps} in the non-punctured case, i.e., we are working with the surface group $\Gamma_g$ for $g \geq 2$. The standard presentation of $\Gamma_g$ is given by
\begin{equation*}
    \langle a_1, a_2, \cdots, a_g, b_g| [a_1, b_1]\cdots[a_g, b_g]\rangle.
\end{equation*}

We note that it has a amalgamated free product decomposition given by $\Gamma_g = F_2 \ast_\Z F_{2g-2}$, where $F_2 = \langle a_1, b_1 \rangle$ and $F_{2g-2} = \langle a_i, b_i: i = 2, \cdots, g \rangle$ with the amalgam being generated by $[a_1,b_1] = ([a_2,b_2]\cdots[a_g,b_g])^{-1}$.

\begin{proof}[Proof of Theorem \ref{thm:main-tracial reps} in the surface group case]
    We start by considering an arbitrary tracial representation $\pi: \Gamma_g \to \cU(M)$. We apply the perturbation in subsection \ref{subsection 3.1} to control $\{\pi(a_1)\}'' \cap \{\pi([a_1,b_1])\}''$ first:
    
    \begin{claim}\label{conclusion-I}
        For any $\varepsilon > 0$, there exists a tracial von Neumann algebra $\widetilde M$ containing $M$ and a representation $\widetilde\pi: \Gamma_g \to \cU(\widetilde M)$ such that:
        \begin{enumerate}
            \item $\|\widetilde\pi(h) - \pi(h)\|_2 < \varepsilon$ for every $h = a_i$ or $b_i$, $i = 1, \cdots, g$;
            \item $\{\widetilde\pi(a_1)\}'' \cap \{\widetilde\pi([a_1, b_1])\}'' = \C$.
        \end{enumerate} 
    \end{claim}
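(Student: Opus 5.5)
We would obtain $\widetilde\pi$ from Perturbation I, choosing the free quotient carefully so that the image of $[a_1,b_1]$ stays nontrivial.

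\textbf{Choice of quotient.} Let $F_2=\langle x,y\rangle$. Define $\phi:\Gamma_g\to F_2$ by
\[
a_1\mapsto x,\quad b_1\mapsto y,\quad a_2\mapsto y,\quad b_2\mapsto x,\quad a_i,b_i\mapsto e \ (i\geq 3).
\]
This is a well-defined homomorphism, since $[x,y][y,x]=e$, and it is onto. Fix $\lambda_\eps$ from Lemma \ref{lem:interporlated regular representations} and put $y_1=\lambda_\eps(x)\in L\langle x\rangle$ and $y_2=\lambda_\eps(y)\in L\langle y\rangle$. Both are quasi-Haar, hence diffuse, and they are freely independent. Set $\widetilde M=M\bar\otimes LF_2$ and $\widetilde\pi=\pi\otimes(\lambda_\eps\circ\phi)$. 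Item (1) is immediate, since
\[
\|\widetilde\pi(h)-\pi(h)\|_2=\|\lambda_\eps(\phi(h))-1\|_2<\eps .
\]

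\textbf{The scalar part is trivial.} We have $\widetilde\pi(a_1)=\pi(a_1)\otimes y_1$ and $\widetilde\pi([a_1,b_1])=\pi(c)\otimes w$, where $c=[a_1,b_1]$ and $w=[y_1,y_2]=y_1\,(y_2y_1^*y_2^*)$. Put $A=L\langle x\rangle$ and $B=\{w\}''$. Then
\[
\{\widetilde\pi(a_1)\}''\cap\{\widetilde\pi(c)\}''\subset (M\bar\otimes A)\cap(M\bar\otimes B)=M\bar\otimes(A\cap B),
\]
using the standard intersection property of tensor products of von Neumann algebras (a consequence of the commutation theorem).

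We claim $A\cap B=\C$. Let $z\in A\cap B$. Since $A$ is abelian and contains $y_1$, $z$ commutes with $y_1$. Since $B$ is abelian, $z$ commutes with $w$, hence with $y_1^*w=y_2y_1^*y_2^*$, and so with $y_2y_1y_2^*$. Let $P$ be the von Neumann algebra generated by $y_1$ and $y_2y_1y_2^*$. Then $z\in B\subset P$, because $w\in P$. Therefore $z\in\cZ(P)$. By Lemma \ref{freely-conj-factor}, $P$ is a factor, so $z\in\C$. It follows that any $z$ in the intersection of item (2) has the form $z=m\otimes 1$ with $m\in M$.

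\textbf{Removing the $M$-component.} This is the step we expect to be the main obstacle, and we would use the quasi-Haar property here. Write $z=F(\pi(a_1)\otimes y_1)$ for a bounded Borel function $F$ on $\T$. The joint distribution of the commuting unitaries $\pi(a_1)\otimes 1$ and $1\otimes y_1$ is the product measure $\mu_{\pi(a_1)}\times\mu_{y_1}$. The condition $F(\pi(a_1)\otimes y_1)=m\otimes 1$ then forces the following: for $\mu_{\pi(a_1)}$-a.e.\ $\alpha$, the function $\beta\mapsto F(\alpha\beta)$ is $\mu_{y_1}$-a.e.\ constant.

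Because $\mu_{y_1}$ is equivalent to Lebesgue measure, $F(\alpha\,\cdot)$ is Lebesgue-a.e.\ constant. Hence $F$ is Lebesgue-a.e.\ constant on $\T$. The spectral measure of $\pi(a_1)\otimes y_1$ is $\mu_{\pi(a_1)}*\mu_{y_1}$, which is absolutely continuous with respect to Lebesgue measure, so $F$ is also constant almost everywhere for that measure. Thus $z$ is a scalar, proving item (2).

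In carrying this out, the care needed is in justifying the tensor intersection identity and the measure-theoretic step. The latter can alternatively be done by Fourier coefficients: expand $m\otimes 1$ in the spectral decomposition of $1\otimes y_1$ and use that $\mu_{y_1}$ charges every Lebesgue-positive set.
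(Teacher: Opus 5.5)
Your proof is correct and follows the paper's argument: the same quotient $\phi$, the same tensor perturbation $\pi\otimes(\lambda_\eps\circ\phi)$, and the same use of Lemma \ref{freely-conj-factor} together with the tensor intersection formula to reduce to elements of the form $m\otimes 1$. The only difference is the last step. You work directly with the joint spectral measure (Fubini, rotation invariance of Lebesgue measure, absolute continuity of $\mu_{\pi(a_1)}*\mu_{y_1}$), whereas the paper uses the quasi-Haar property to identify $\{\lambda_\eps(a)\}''$ non-trace-preservingly with $L\Z$, so that $\pi(a_1)\otimes u$ becomes Haar, and then reads off the Fourier coefficients via $\id\otimes\tau$; these are equivalent implementations of the same idea.
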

    \begin{proof}[Proof of claim]
        We surject $\Gamma_g$ onto $\F_2 = \langle a, b \rangle$ via the quotient map $\phi$ defined by sending $a_1$ to $a$, $b_1$ to $b$, $a_2$ to $b$, $b_2$ to $a$, and $a_i$, $b_i$ to $e$ for $i > 2$. Consider the representation $\lambda_\varepsilon: \F_2 \to \cU(L(\F_2))$, $\tilde{\lambda}_\eps = \lambda_\varepsilon \circ \phi$ and then $\widetilde\pi = \pi \otimes \tilde{\lambda}_\eps$. That $\widetilde\pi$ satisfies condition 1 is clear. Now, note that,
    \begin{equation*}
    \begin{split}
        \{\tilde{\lambda}_\eps(a_1)\}'' \cap \{\tilde{\lambda}_\eps([a_1, b_1])\}'' &\subset \cZ(\{\lambda_\varepsilon(a), \lambda_\varepsilon([a, b])\}'')\\
        &= \cZ(\{\lambda_\varepsilon(a), \lambda_\varepsilon(b)\lambda_\varepsilon(a)\lambda_\varepsilon(b)^\ast\}'')\\
        &= \C,
    \end{split}
    \end{equation*}
    where the last equality is due to Lemma \ref{freely-conj-factor} and the observation that $\lambda_\varepsilon(a)$ and $\lambda_\varepsilon(b)$ are freely independent diffuse unitaries. Thus,
    \begin{equation*}
    \begin{split}
        \{\widetilde\pi(a_1)\}'' \cap \{\widetilde\pi([a_1, b_1])\}'' &\subset (\{\pi(a_1)\}'' \cap \{\pi([a_1, b_1])\}'') \bar\otimes (\{\tilde{\lambda}_\eps(a_1)\}'' \cap \{\tilde{\lambda}_\eps([a_1, b_1])\}'')\\
        &\subset \{\pi(a_1)\}'' \bar\otimes 1.
    \end{split}
    \end{equation*}
    
    Hence, it suffices to show $\{\pi(a_1) \otimes \lambda_\varepsilon(a)\}'' \cap (\{\pi(a_1)\}'' \bar\otimes 1) = \C$. Since the spectral measure of $\lambda_\varepsilon(a)$ is equivalent to the Haar measure of $\mathbb T$, there is a (non-trace-preserving) isomorphism $\{\lambda_\varepsilon(a)\}'' \to L\Z$ that sends $\lambda_\varepsilon(a)$ to the standard generator $u$ of $L\Z$. Since tensor products do not depend on the choice of traces, the isomorphism extends to an isomorphism $M \bar\otimes \{\lambda_\varepsilon(a)\}'' \to M \bar\otimes L\Z$ that is identity on $M \bar\otimes 1$ and sends $1\otimes\lambda_\varepsilon(a)$ to $1\otimes u$. Thus, we see that it suffices to show $$\{\pi(a_1) \otimes u\}'' \cap (\{\pi(a_1)\}'' \bar\otimes 1) = \C.$$ 
    As $\pi(a_1) \otimes u$ is a Haar unitary, this easily follows by writing out the Fourier series.   Indeed, given an element $x$ in this intersection, there is $(s_k) \in \ell^2$ such that 
    \[
        \sum_{k\in \Z} s_k \pi(a_1)^k\otimes u^k =x.
    \]\
    
    Hence, as $x \in M \otimes 1$, $(\id \otimes \tau)(x) = x$, and thus
    \[
        s_0 = (\id \otimes \tau)\left(\sum_{k\in \Z} s_k \pi(a_1)^k\otimes u^k\right) = x,
    \]
    i.e., $x\in \C$.
    \end{proof}

    The above claim shows the representations $\pi$ for which $\{\pi(a_1)\}'' \cap \{\pi([a_1,b_1])\}'' = \C$ are dense in the Wasserstein topology. Hence, we may assume $\pi$ satisfies this condition. We now apply Lemma \ref{commutator-lem}. Choose $u \in \cU(L\Z) \setminus \C$ in $\widetilde M = M \ast_{\{\pi(a_1)\}''} (\{\pi(a_1)\}'' \bar\otimes L\Z)$. Then $\widetilde\pi(a_i) = \pi(a_i)$ for $i = 1, \cdots, g$, $\widetilde\pi(b_1) = \pi(b_1)u$, and $\widetilde\pi(b_i) = \pi(b_i)$ for $i = 2, \cdots, g$ define a representation $\widetilde\pi: \Gamma_g \to \cU(\widetilde M)$. By Lemma \ref{commutator-lem}, we obtain
    \begin{equation*}
    \begin{split}
        \cZ(\widetilde\pi(\Gamma_g)'') \cap \{\widetilde\pi([a_1, b_1])\}'' &\subset \{\pi(b_1)u\}' \cap \{\pi([a_1, b_1])\}''\\
        &\subset \{\pi(b_1)u\}' \cap M \cap \{\pi([a_1, b_1])\}''\\
        &\subset \{\pi(a_1)\}'' \cap \{\pi([a_1, b_1])\}''\\
        &= \C.
    \end{split}
    \end{equation*}
    
    Since we may choose $u$ to be arbitrarily close to 1 in $2$-norm, this implies that the representations $\pi$ for which $\cZ(\pi(\Gamma_g)'') \cap \{\pi([a_1, b_1])\}'' = \C$ are dense in the Wasserstein topology, so we shall now assume $\pi$ satisfies this condition.

    We now tensor our representation with $\tilde{\lambda}_\eps$ defined in the proof of Claim \ref{conclusion-I} again. By Lemma \ref{lem:mixing-lem}, we have
    \begin{equation*}
    \begin{split}
        \cZ(\widetilde\pi(\Gamma_g)'') \cap \{\widetilde\pi([a_1, b_1])\}'' &\subset (\cZ(\pi(\Gamma_g)'') \bar\otimes 1) \cap \{\pi([a_1, b_1]) \otimes \tilde{\lambda}_\eps([a_1, b_1])\}''\\
        &\subset (\cZ(\pi(\Gamma_g)'') \cap \{\pi([a_1, b_1])\}'') \bar\otimes 1\\
        &= \C.
    \end{split}
    \end{equation*}

    On the other hand, we also note that, by Proposition \ref{prop:amalgam_tensor_perturb}, we have $\widetilde\pi|_{\langle a_1, b_1 \rangle}$ is non-amenable and thus, by Lemma \ref{lem:no_amenable_summand}, $\widetilde\pi(\langle a_1, b_1 \rangle)''$ has no amenable summands. Similarly, $\widetilde\pi|_{\langle a_i, b_i: i = 2, \cdots, g \rangle}$ is non-amenable and so $\widetilde\pi(\langle a_i, b_i: i = 2, \cdots, g \rangle)''$ has no amenable summands. That is:

    \begin{claim}\label{conclusion-III}
        For any $\varepsilon > 0$, there exists a tracial von Neumann algebra $\widetilde M$ containing $M$ and a representation $\widetilde\pi: \Gamma_g \to \cU(\widetilde M)$ such that:
        \begin{enumerate}
            \item $\|\widetilde\pi(h) - \pi(h)\|_2 < \varepsilon$ for every $h = a_i$ or $b_i$, $i = 1, \cdots, g$;
            \item $\cZ(\widetilde\pi(\Gamma_g)'') \cap \{\widetilde\pi([a_1, b_1])\}'' = \C$;
            \item Both $\widetilde\pi|_{\langle a_1, b_1 \rangle}$ and $\widetilde\pi|_{\langle a_i, b_i: i = 2, \cdots, g \rangle}$ are non-amenable. Hence, $\widetilde\pi(\langle a_1, b_1 \rangle)'' \nprec_{\widetilde M} \{\widetilde\pi([a_1, b_1])\}''$ and $\widetilde\pi(\langle a_i, b_i: i = 2, \cdots, g \rangle)'' \nprec_{\widetilde M} \{\widetilde\pi([a_1, b_1])\}''$.
        \end{enumerate}
    \end{claim}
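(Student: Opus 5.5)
The claim packages what the preceding paragraphs established. I would prove it by composing the three steps already carried out, with a final tensoring by $\tilde{\lambda}_\eps$.

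Start from an arbitrary tracial $\pi$ and a given $\varepsilon>0$.

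\textbf{Step 1.} Apply Claim \ref{conclusion-I} with $\varepsilon/3$. This yields $\pi_1$, $\varepsilon/3$-close on the generators, with $\{\pi_1(a_1)\}''\cap\{\pi_1([a_1,b_1])\}''=\C$.

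\textbf{Step 2.} Apply the free multiplicative shift. Work in $M_1 *_{\{\pi_1(a_1)\}''}(\{\pi_1(a_1)\}''\bar\otimes L\Z)$ and pick $u\in\cU(L\Z)\setminus\C$ with $\|u-1\|_2<\varepsilon/3$. Then Lemma \ref{commutator-lem} and the displayed chain of inclusions give a representation $\pi_2$ with $\cZ(\pi_2(\Gamma_g)'')\cap\{\pi_2([a_1,b_1])\}''=\C$. Only $b_1$ moves, and $\|\pi_1(b_1)u-\pi_1(b_1)\|_2=\|u-1\|_2$.

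\textbf{Step 3.} Set $\widetilde\pi=\pi_2\otimes\tilde{\lambda}_\eps$, where $\phi$ is the quotient from Claim \ref{conclusion-I}. It sends $a_1,b_2\mapsto a$ and $b_1,a_2\mapsto b$, so each of $\phi(\langle a_1,b_1\rangle)$ and $\phi(\langle a_i,b_i:i\ge2\rangle)$ contains both standard generators. Proposition \ref{prop:amalgam_tensor_perturb} then gives closeness for small $\eps$ and non-amenability of both restrictions. For (2), I would invoke Lemma \ref{lem:mixing-lem}: it puts the center inside $\cZ(\pi_2(\Gamma_g)'')\bar\otimes 1$. Intersecting with $\{\pi_2([a_1,b_1])\otimes\tilde{\lambda}_\eps([a_1,b_1])\}''$ lands in $(\cZ(\pi_2(\Gamma_g)'')\cap\{\pi_2([a_1,b_1])\}'')\bar\otimes 1$.

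The point needing care is this last inclusion $\{x\otimes w\}''\cap(A\bar\otimes 1)\subset\{x\}''\bar\otimes1$ for a unitary $w$. I would prove it by applying the conditional expectation $\id\otimes\tau$, which fixes $A\bar\otimes1$ and maps $\{x\otimes w\}''$ into $\{x\}''\otimes 1$, since $\tau(w^k)$ are scalars. Combining the three steps, the triangle inequality gives (1).

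For (3), I would first use Lemma \ref{lem:no_amenable_summand}: non-amenability of the restrictions implies that $\widetilde\pi(\langle a_1,b_1\rangle)''$ and $\widetilde\pi(\langle a_i,b_i:i\ge 2\rangle)''$ have no amenable direct summand. The target $\{\widetilde\pi([a_1,b_1])\}''$ is abelian, hence amenable. I expect the only nontrivial step to be the standard fact behind the remark after Lemma \ref{approx-factorial}. If $Q\prec_{\widetilde M} A$ with $A$ amenable, then some corner $Qp$, with $p$ in $Q'\cap\widetilde M$, embeds into an amplification of $A$ as a subalgebra with a conditional expectation. That corner is therefore amenable, and its central support gives a nonzero amenable direct summand of $Q$. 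This contradicts the absence of amenable summands, so $\nprec$ holds in both cases.
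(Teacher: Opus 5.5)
Your proposal is correct and follows the paper's argument: Claim \ref{conclusion-I}, then the free multiplicative shift of Lemma \ref{commutator-lem}, then a second tensoring with $\tilde{\lambda}_\eps$ controlled by Lemma \ref{lem:mixing-lem} and Proposition \ref{prop:amalgam_tensor_perturb}, with Lemma \ref{lem:no_amenable_summand} giving (3). Your $\id\otimes\tau$ argument for the intersection in (2) and your intertwining argument for the ``Hence'' in (3) fill in steps the paper leaves implicit; strictly, $Qp$ is not itself a subalgebra of the amplification but is isomorphic to $\theta(Q)v^*v$, a central cut-down of a subalgebra of an amplification of the abelian algebra, and this still makes $Qp$ amenable, so your conclusion is unaffected.
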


    Finally, we apply Lemma \ref{approx-factorial} to the tuple
    \begin{equation*}
        (\widetilde\pi(\Gamma_g)'', \widetilde\pi(\langle a_1, b_1 \rangle)'', \widetilde\pi(\langle a_i, b_i: i = 2, \cdots, g \rangle)'', \{\widetilde\pi([a_1, b_1])\}'').
    \end{equation*}
    
    This generates a factorial representation $\widetilde{\widetilde\pi}$. It is also non-amenable since the perturbation does not affect the representation when restricted to $\langle a_i, b_i: i = 2, \cdots, g \rangle$. Since $\widetilde\pi|_{\langle a_i, b_i: i = 2, \cdots, g \rangle}$ is non-amenable, so is $\widetilde{\widetilde\pi}|_{\langle a_i, b_i: i = 2, \cdots, g \rangle}$. Whence $\widetilde{\widetilde\pi}$ is non-amenable. This concludes the proof of Theorem \ref{thm:main-tracial reps} in the surface group case.
\end{proof}

\subsection{The free group $F_g$ case}

The proof for the free group case is substantially easier and uses much less than established in general. For this reason, we will make this proof somewhat self-contained, while still referencing other parts when necessary. We will write the standard free generators of $F_g$ as $a_1, \cdots, a_g$.

\begin{proof}[Proof of Theorem \ref{thm:main-tracial reps} in the free group case]
    The first part here is to perturb the representation so that $\pi(a_i)$ is diffuse for all $i$. The same perturbation will also be used again to ensure non-amenability. This is given by Lemma \ref{lem:interporlated regular representations} and Proposition \ref{prop:free_quotient_tensor_perturb}. But in the case of the free group we can use a simpler version. Indeed, consider the regular representation $\lambda:F_g\to\cU(\ell^2(F_g))$. We have trace preserving isomorphism of von Neumann algebras $\{\lambda(a_i)\}''\cong L^\infty(\T,\mu_{\mathrm{Leb}})$ sending $\lambda(a_i)$ to the functions $f:e^{2\pi i}\mapsto e^{2\pi i}$. For $m\in \N$ consider the function $f^{1/m}:e^{2\pi it}\mapsto e^{2\pi it/m}$, and consider the corresponding element of $LF_g$, namely, the element $\lambda_m(a_i)=f^{1/m}(\lambda(a_i))$. We note that $\lambda_m(a_i)$ is diffuse. Moreover, the representation $\lambda_m$ is non-amenable, because when restricted to $\langle a_1^m, \cdots, a_g^m\rangle \cong F_g$, the representation is simply the regular representation of $F_g$, which is non-amenable. Also, it is clear that $\lambda_m(a_i)\to 1$ in operator norm, so in particular in the $\|\cdot\|_2$ norm as well. Hence, $\lambda_m\to 1$ in the Wasserstein topology. 

    Now, given an arbitrary tracial representation $\pi$, we consider $\widetilde{\pi}_m=\pi\otimes \lambda_m$, so that $\widetilde{\pi}_m\to \pi$ in the Wasserstein topology. The fact that $\lambda_m$ is non-amenable implies that $\widetilde{\pi}_m$ is non-amenable by Lemma \ref{lem:tensor_permanence}. Since $\lambda_m(a_i)$ is diffuse for all $i$, $\widetilde\pi_m(a_i)$ is diffuse as well. Thus, we may start by assuming $\pi(a_i)$ is diffuse for all $i$.

    To that end, let $\pi$ be an arbitrary tracial representation in which $\pi(a_i)$ is diffuse for all $i$. Let $M$ be the corresponding von Neumann algebra. Consider the free product $\widetilde M = M*L\Z$, and let $u\in L\Z \setminus \C$ unitary with $\|u-1\|_2<\eps$. Let $M_1=\pi(F_{g-1})''$ and  $M_2=\pi(\Z)''$ where we write $F_{g-1}=\langle a_1, \cdots, a_{g-1}\rangle$ and $\Z = \langle a_g\rangle$. Set $P=uM_1u^*\vee M_2$. The proof of Lemma \ref{approx-factorial} shows $P$ is a factor. We summarize the relevant argument here: since $\pi(a_i)$ is diffuse for all $i$, $M_1$ and $M_2$ are both diffuse, whence $M_k \not\prec_M \C$ for $k = 1, 2$. By Theorem 1.1 of \cite{IPP08}, $M_2' \cap \widetilde M \subset M$. Similarly, $(uM_1u^\ast)' \cap \widetilde M \subset uMu^\ast$. The result now follows from Lemma \ref{intersect}.

    This implies the set of tracial factorial representations is dense in the Wasserstein topology. Thus, let $\pi$ be an arbitrary tracial factorial representation. As discussed previously, we may consider $\widetilde{\pi}_m=\pi\otimes \lambda_m$, which is non-amenable. It therefore suffices to show it is factorial. This follows from the same proof as that of Lemma \ref{lem:mixing-lem}.
\end{proof}

\bibliographystyle{amsalpha}
\bibliography{references}

\end{document}